\documentclass[11pt]{amsart} 
\usepackage{amsmath, amssymb, latexsym, tikz}
   \usepackage{forest}
\usepackage{caption,ifthen,url,fancyhdr,cite}
\usepackage[foot]{amsaddr}          
\usepackage{libertine}      
\usepackage{longtable}    
\usepackage{upquote}
\usepackage{listings}   
\usepackage{textcomp}    
\usepackage{adjustbox}
\usepackage{booktabs} 
   \usepackage{etoolbox}
    \usepackage[all]{xy}
    
  \usepackage[pdftex,colorlinks=true, citecolor=blue, linkcolor=blue, urlcolor=black]{hyperref}

\makeatletter
\patchcmd{\@verbatim}  
  {\verbatim@font}
  {\verbatim@font\scriptsize}
  {}{}
  \makeatother  
  
\newcommand\textcode[1]{{\footnotesize \rm\texttt{#1}}}   

\makeatletter 
\renewcommand\subsubsection{\@startsection{subsubsection}{3}%
  \z@{.5\linespacing\@plus.7\linespacing}{-.5em}%
  {\normalfont\bfseries}} 
\makeatother
 
\usepackage[T1]{fontenc}

\newcommand\makequotestraight{%
\begingroup\lccode`~=`' 
\lowercase{\endgroup\let~}\textquotesingle
\catcode`'=\active
}

\usepackage[a4paper,right=2.5cm, left=2.5cm, top=2.5cm, bottom=2.5cm, marginpar=1.6cm]{geometry}

\newtheoremstyle{mythm}                   
{6pt}
{6pt}
{\it}
{}
{\bf}
{.}
{.5em}
{}

\newtheoremstyle{mydef}                   
{6pt}
{6pt}
{}
{}
{\bf}
{.}
{.5em}
{}

\newtheoremstyle{myrem}                   
{6pt}
{6pt}
{}
{}
{\bf}
{.}
{.5em}
{}

\theoremstyle{mythm}      
\newtheorem{theorem}{Theorem}[section]
\newtheorem{proposition}[theorem]{Proposition}
\newtheorem{lemma}[theorem]{Lemma}
        
\newtheorem{conjecture}[theorem]{Conjecture} 

\theoremstyle{mydef}      
\newtheorem{definition}[theorem]{Definition}
\newtheorem{example}[theorem]{Example}

\theoremstyle{myrem}
\newtheorem{remark}[theorem]{Remark}
\numberwithin{equation}{section}

\newcounter{ithmcount}

\renewcommand{\leq}{\leqslant} 
\renewcommand{\geq}{\geqslant}

\newcommand{\Aut}{{\rm Aut}}

\newcommand{\la}{\langle}
\newcommand{\ra}{\rangle}
\newcommand{\sss}{\sigma}
\newcommand{\aaa}{\alpha}
\newcommand{\ZZ}{{\mathbb{Z}}}
\newcommand{\bc}{\begin{center}}
\newcommand{\ec}{\end{center}}
\newcommand{\ol}{\overline}
\newcommand{\bi}{\begin{itemize}}
\newcommand{\ei}{\end{itemize}}

\makeatletter
\newcommand{\udot}{\mathpalette\udot@\relax}
\newcommand{\udot@}[2]{%
  \begingroup
  \sbox\z@{$#1{:}$}%
  \sbox\tw@{$#1{.}$}%
  \raisebox{\dimexpr\ht\z@-\ht\tw@}{$\m@th#1.$}%
  \endgroup
}
\makeatother

\newcommand{\Z}{\mathbb{Z}}

\usepackage{fancyvrb}
\fvset{commandchars=\\\{\}}

\makeatletter
\@namedef{subjclassname@1991}{2020 MSC}
\makeatother

\begin{document}

\title{On the trivial units property and the unique product property}
\subjclass[2000]{} 
\author[H. Dietrich]{Heiko Dietrich}
\author[M. Lee]{Melissa Lee}
\author[A. Nies]{Andr\'e Nies}
\author[M. Vinyals]{Marc Vinyals}
\address[Dietrich, Lee]{School of Mathematics, Monash University, Clayton VIC 3800, Australia}
\address[Nies, Vinyals]{School of Computer Science, University of Auckland, Auckland 1142, New Zealand} 
\email{\rm heiko.dietrich@monash.edu, melissa.lee@monash.edu, andre@cs.auckland.ac.nz, marc.vinyals@auckland.ac.nz}
\keywords{non-unique product groups, zero divisor conjecture}
\thanks{The second author acknowledges the support of an Australian Research Council Discovery Early Career Researcher Award (project number DE230100579) The third and fourth authors acknowledge the support of the Marsden fund of New Zealand. The first author thanks the University of Auckland for the hospitality and support for various research stays.}
\date{\today}

\begin{abstract}
  We report on some computational experiments related to the trivial units property and unique product property for group rings of torsion-free groups. These properties are  related to   Kaplansky's unit and zero-divisor conjectures. Our investigations include a classification of certain symmetric non-trivial units in the binary group ring of the Hantzsche-Wendt group; this group was used in Gardam's refutal of Kaplansky's unit conjecture. We also exhibit and investigate a new candidate group that fails the unique units property but may satisfy the trivial unit property.  No examples of groups with these properties are known to date.
\end{abstract} 

\maketitle
\section{Introduction}\label{sec_intro} 
\noindent Let $L$ be a ring with identity, not necessarily commutative. A \emph{zero-divisor} in $L$ is a non-zero $x\in L$ such that $xy=0$ or $yx=0$ for some nonzero $y\in L$. 
One says that $x\in L$ is a \emph{unit}  if there exists $y\in L$ such that $xy=1$ and $yx=1$; if $L$ has no zero-divisors, then one of the conditions implies the other. We will be interested in the group ring $R[G]$ of a group $G$ and commutative ring $R$ with identity. Our paper focusses on two interrelated conditions: the trivial units property for group rings, and the unique product property for groups. We begin by introducing these conditions and giving brief background. 
\begin{definition} \label{df:TUP} A   group $G$ satisfies the \emph{trival units   property (TUP) for} an integral domain~$R$ if the only units in the group ring $R[G]$ are of the form $rg$, where $g\in G$ and $r $ is a unit of $R$. Such  units will be called  trivial.  \end{definition} 
The following long-open conjecture was refuted by Gardam~\cite{G}.
\begin{conjecture} \label{UC}
	If $G$ is a torsion-free group and $R$ an integral domain, then $G$ satisfies the TUP for $R$. 
\end{conjecture}
Gardam \cite{G} found   a non-trivial unit in $\mathbb{F}_2[P]$, where $\mathbb{F}_2$ is the field with two elements and $P$ is the torsion-free group defined by
\begin{align}\label{eq_P}P &= \langle a,b \mid b^{-1}a^2b = a^{-2}, a^{-1}b^{2}a=b^{-2}\rangle.
\end{align}
 We note that $P$ is a crystallographic group, and $x=a^2, y=b^2$ and $z=abab$ (in the notation of \cite{G}) are   free generators of the abelian translation subgroup of $P$, which has index 4 in $P$. 
\begin{remark} \label{rem:Gardam SAT}
	While the proof given in Gardam's paper is purely mathematical, the original discovery of a non-trivial unit in $\mathbb{F}_2[P]$ was made computationally using a SAT solver.   Gardam found   a  non-trivial unit supported on  a ball of radius $5$  around the identity in the Cayley graph of $P$. To do so, he  reformulated the problem of finding a unit as a Boolean satisfiability (SAT) problem, assigning Boolean variables for the coefficients in $\mathbb F_2$  to    words in the generators of $P$ and their inverses. 
\end{remark}

The  following purely  group-theoretic property was introduced by  Rudin and Schneider~\cite{Rudin.Schneider:64}, who used the term $\Omega$-group. 
\begin{definition} \label{df:UPP}	A group $G$ has the  \emph{unique product property} (UPP)   if for all non-empty subsets $A,B\subseteq G$,  there exist some $a\in A$ and $b \in B$ such that whenever $ab=a'b'$ with $a'\in A$, $b'\in B$, then $a=a'$ and $b=b'$. In other words, when viewing $AB$ as a multiset, some element occurs  with multiplicity $1$. 
\end{definition}

 Promislow~\cite{P} showed that $P$ fails the UPP. It is well known that UPP   implies the TUP for every domain;~see Proposition~\ref{prop:chain} below for a proof.   Suprisingly, the converse implication is still unknown: is   there  a group that satisfies the TUP for some domain, but fails the UPP? This question is implicit in early work, and explicitly  asked for instance in \cite{G}.

This  paper follows an experimental approach,  using SAT solvers and computer algebra  towards   new examples, which are then evaluated for advancing the  theory. We 
\bi 
\item describe  nontrivial  units in $\mathbb{F}_2[P]$ supported on   balls from radius   4 onwards, and their potential symmetry properties;
\item expose a new candidate group  that fails the UPP, but may   satisfy the TUP for some domain.  This group,  denoted $H_4$, is a small index extension of the usual integral Heisenberg group.
\ei

\section{Mathematical background}

\subsection{Group rings}   For a group $G$ and a commutative ring $R$ with multiplicative identity, the \textit{group ring} $R[G]$ consists of all the  formal sums $u=\sum_{g\in G} u_gg$ where only finitely many coefficients $u_g\in R$ are non-zero. The addition operation  is carried out component-wise. The operation  of multiplication is given by  \begin{center} $(\sum_{g\in G} u_gg )(\sum_{h\in G} v_hh )= \sum_{r \in G} w_r r$,\end{center} where $w_r = \sum_{gh = r} u_gv_h$. By slight abuse of notation, we usually write $1=1_G$ and $1=1_R$. The \emph{support} of $\sum_{g\in G}u_g g$ is the subset $\{g\in G \mid u_g\ne 0\}$.

\subsection{Some background on the unit conjecture}  
Higman posed Conjecture~\ref{UC}     in his unpublished 1940 PhD thesis \cite[p.~77]{H1}  for the case that $R = \ZZ$. The conjecture was  taken up  in generality  by Kaplansky \cite{K1}, and  became known as the   \textit{unit conjecture}.  In his paper, Kaplansky attributes it to a list of problems arising from a 1968 conference in Moldova.  Note that if  $R[G]$  has a non-trivial unit, then after extending  the integral domain~$R$ to  its field of fractions~$K$, one  obtains a   counterexample   in   the group ring $K[G]$.   Kaplansky also posed the stronger  {\it zero divisor conjecture:} $K[G]$ has no zero divisors, and the even stronger {\it idempotent conjecture}, that  the only idempotents in $K[G]$ are $0,1$.

There are various families   of groups that have the TUP for each integral domain $R$.   Higman   showed this    for abelian groups, and more generally for \textit{locally indicable groups}; this means that    every non-trivial finitely generated subgroup has a quotient isomorphic to $\mathbb{Z}$.

After Gardam's refutation~\cite{G}, his work was generalised by Murray \cite{murray}, who altered the  construction to obtain    non-trivial units in $\mathbb{F}_q[P]$ for every prime $q$.   Gardam \cite{G2}   found   a non-trivial unit in $\mathbb{C}[P]$; it has   the same support as the original example over $\mathbb{F}_2[P]$. Thus, the conjecture is refuted for fields of each characteristics. However, the original conjecture of Higman was   for integral group rings, and   remains open.

\subsection{Background on the unique product property}

The study of the UPP defined in Definition \ref{df:UPP} has a long history, and    was instrumental to   the refutation of the unit conjecture \cite{G}.
On the positive side, every bi-orderable group has UPP because the product of the maxima of two sets is unique; in fact, the weaker property of being diffuse suffices~\cite{bowditch}; for definitions and a proof see Proposition \ref{prop:chain} below.

 The first example of a torsion-free group that fails  the UPP was constructed by Rips and Segev \cite{RS} who made use of small cancellation theory. Passman introduced the torsion-free group $P$ of \eqref{eq_P} in connection with the Kaplansky conjectures.  Promislow \cite{P} exhibited a $14$-element subset $A\subseteq P$ such that the multiset $AA$ has no element with multiplicity~$1$, showing that $P$ does not satisfy the UPP. Carter \cite{C} generalised $P$ to an infinite family of torsion-free groups
\begin{align}\label{eq_Pk} 
	P_k &= \langle a,b \mid a^{-1}b^{2^k}a = b^{-2^k}, b^{-1}a^2b=a^{-2} \rangle\quad(k>0)
\end{align}
and showed that the $P_k$ are pairwise non-isomorphic, do not contain $P$ as a subgroup for $k>1$, and none satisfies the UPP; note that $P=P_1$. Craig and Linnell \cite{CL} conjectured that every uniform pro-$p$-group satisfies the UPP. They also consider some variants of $P$ with more than two generators and show that each is torsion-free. None of these variants satisfies the UPP since they each contain $P$ as a subgroup. Lastly, we mention that Nielsen and Soelberg proved bound for the sizes of sets in torsion-free groups that witness the non-unique product property; for example, they proved that if $AA$ has a non-unique product, then $|A|\geq 8$ and this bound is sharp, see \cite[Theorem~1.2]{nielsen}.
  We refer to \cite{CL} and the references therein for more details on the unique product property.

  As a quick observation, the  UPP   is related to  zero divisors in $K[G]$ as follows: Let $u,v\in K[G]$ be non-zero elements with support $A$ and $B$, respectively. Let $C$ be the set of elements in the multiset $AB$, so that 
  \[uv=\sum_{c\in C} \big(\sum_{a\in A, b\in B\atop ab=c} u_av_b\big) c.\]
  If $uv=0$, then this requires that every $c\in C$ occurs with multiplicity at least $2$ in $AB$; in particular, if $K[G]$ has zero divisors, then $G$ does not satisfy the UPP.

 \subsection{Implications between conditions}

We  describe a hierarchy of properties for $K[G]$, from\linebreak strongest to weakest,  which includes the properties on which  the three Kaplansky conjectures are based. While the  implications seem well-known (see also \cite{bowditch,blog}),   we include new short proofs and references for completeness. Recall that a group $G$ is \emph{left-orderable} if there is a total order ``$\leq$'' on $G$ that is invariant under left multiplication. A group  $G$ is \emph{diffuse} \cite{bowditch} if for every non-empty finite subset $C\subseteq G$ there exists an \emph{extremal element} in $C$, that is, $c\in C$ such that for all non-identity $g\in G$, either $gc \notin C$ or $g^{-1} c\notin C$.

\begin{proposition}
\label{prop:chain}
Let $K$ be a field and let  $G$ be a torsion-free group. Then $(1)\Rightarrow (2)\Rightarrow \ldots\Rightarrow(5)$, where $(1),\ldots,(5)$ are the following properties:
\begin{enumerate}
\item \label{lo} $G$ is left-orderable.
\item \label{diff} $G$ is diffuse. 
\item \label{upp} $G$ satisfies the UPP. 
\item \label{uc} $K[G]$ has only the trivial units.
\item \label{zdc} $K[G]$ has no zero divisors.
\end{enumerate}					
\end{proposition}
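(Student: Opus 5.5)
I will prove the four implications separately, each by a short direct argument.

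\emph{(1)$\Rightarrow$(2).} Fix a left-invariant total order $\leq$ on $G$ and a non-empty finite $C\subseteq G$. The order only gives left-invariance, so I will work with right translates $gc$, i.e.\ comparisons of the form $c'c^{-1}$. The plan is to choose $c\in C$ extremal with respect to an auxiliary total order on $C$ built from $\leq$ and the finite configuration.

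A first attempt is to take $c=\max C$ and suppose $gc,g^{-1}c\in C$ with $g\neq 1$. Then $gc\le c$ and $g^{-1}c\le c$. Left-invariance does not directly compare $gc$ with $c$ through $g$, so this fails as stated. The standard fix (Linnell/Bowditch) is to pass to the right-invariant order $x\preceq y \iff x^{-1}\le y^{-1}$ and argue as follows. Suppose no element of $C$ is extremal. For each $c$, pick $g_c\neq1$ with $g_c^{\pm1}c\in C$. Using right-invariance, $g_c c\succ c$ or $g_c^{-1}c\succ c$, since $g_c\succ 1$ or $g_c^{-1}\succ1$. Thus every $c$ has a strictly $\succ$-larger element of $C$, which is impossible in a finite set. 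This step is the main subtlety, because one must use the order on the correct side. Here right-orderable is equivalent to left-orderable via $x\mapsto x^{-1}$.

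\emph{(2)$\Rightarrow$(3).} Given finite non-empty $A,B$, apply diffuseness to $C=AB$ to get an extremal $c=ab$. If $c$ had two representations $ab=a'b'$ with $a\ne a'$, the aim is to produce $g\ne1$ with $gc,g^{-1}c\in C$. The natural choice is to write $c=ab=a'b'$ and set $g=a'a^{-1}$, so that $g^{-1}c=a a'^{-1}a'b'$. This requires more care, since these elements need not lie in $AB$.

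I will instead follow Bowditch's route. Diffuse groups are torsion-free. For UPP, apply extremality to the finite set $C=B^{-1}A^{-1}\cdot$\dots; more simply, apply it to $A$ and to $B$ separately, choosing $a$ extremal in $A$ with respect to left translates and $b$ extremal in $B$. Then show $ab$ is uniquely represented: if $ab=a'b'$, put $g=a^{-1}a'=bb'^{-1}$. I expect this bookkeeping to be the fiddly part. If it resists, I will cite \cite{bowditch} for diffuse $\Rightarrow$ UPP.

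\emph{(3)$\Rightarrow$(4).} Let $uv=1$ with supports $A,B$, and suppose $|A|+|B|>2$. The standard two-unique-products lemma applies: UPP implies that whenever $|A||B|>1$ there are at least two uniquely represented products (Strojnowski). For the product $uv$, a uniquely represented product $ab$ has coefficient $u_av_b\ne0$, so it lies in the support $\{1\}$ of $uv$. Two such products therefore contradict $\operatorname{supp}(uv)=\{1\}$. Hence $|A|=|B|=1$ and the unit is trivial. The needed lemma is proved by the usual trick: given a unique product $a_0b_0$, consider the sets $A'=a_0^{-1}A$ and $B'=Bb_0^{-1}$ and then $B'A'$, or cite Strojnowski.

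\emph{(4)$\Rightarrow$(5).} Suppose $xy=0$ with $x,y\neq0$. Writing $x=\sum x_g g$, we have $yKGx$ not identically zero, so pick $h$ with $z=yhx\ne0$; replacing $y$ by $yh$ and using $z^2=yhxyhx=0$, we obtain a nonzero $z$ with $z^2=0$. Then $(1+z)(1-z)=1$, so $1+z$ is a unit. It is nontrivial, because $1+z=rg$ would force $z=rg-1$, and $(rg-1)^2=0$ is impossible. Indeed, if $g\ne1$, then $r^2g^2-2rg+1$ has support containing $1$ unless $g^2=1$, which torsion-freeness excludes; the case $g=1$ gives $r=1$ and $z=0$.
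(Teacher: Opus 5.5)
Two of your four implications have genuine gaps, and a third relies on a citation whose sketch is off.

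\emph{(2)$\Rightarrow$(3).} You wrote down the correct argument and then discarded it on a false claim. Take $c=ab=a'b'$ extremal in $C=AB$ and put $g=a'a^{-1}\neq 1$. Both translates do lie in $AB$: $gc=a'a^{-1}ab=a'b$ and $g^{-1}c=aa'^{-1}a'b'=ab'$. This contradicts extremality, and it is exactly the paper's two-line proof. The replacement you commit to, choosing $a$ extremal in $A$ and $b$ extremal in $B$ separately, is false. In $G=\mathbb{Z}$ with $A=B=\{0,1\}$, every element of $A$ and of $B$ is extremal, yet $a=0$, $b=1$ gives $a+b=1=1+0$, which is not a unique product. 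A fallback citation does not repair a wrong argument.

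A smaller point in (1)$\Rightarrow$(2): you also claim there that the maximum argument fails. It does not. If $gc<c$, left multiplication by $g^{-1}$ gives $c<g^{-1}c$, so $g^{-1}c\notin C$. Your detour through the right-invariant order is correct but unnecessary.

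\emph{(4)$\Rightarrow$(5).} The assertion that $yK[G]x\neq 0$ is the entire non-trivial content of this implication, and you give no reason for it; writing $x$ in coordinates does not supply one. It is the statement that $K[G]$ is a prime ring. This holds because a torsion-free group has no non-trivial finite normal subgroups (Connell's theorem; the paper cites \cite[Theorem 4.2.10]{passman}). It genuinely fails without such a hypothesis. For finite $G$ with $\mathrm{char}\,K$ dividing $|G|$ and $x=y=\sum_{g\in G}g$, one has $xy=0$ and also $yhx=0$ for every $h\in G$. Once primeness is invoked, the rest of your argument is correct and matches the paper: the square-zero element $z$, the unit $1+z$, and your check that it is non-trivial.

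\emph{(3)$\Rightarrow$(4).} This is a valid alternative route. You deduce it from Strojnowski's theorem that UPP forces two uniquely represented products. The paper instead runs Strojnowski's trick directly on $E=B^{-1}A$ and $F=BA^{-1}$, built from the supports of the unit and its inverse. Your sketch of the lemma is off, though: after normalising to $A'=a_0^{-1}A$ and $B'=Bb_0^{-1}$, the sets to use are $B'^{-1}A'$ and $B'A'^{-1}$, not $B'A'$. As written, that step rests entirely on the citation.
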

\begin{proof} $\eqref{lo}\Rightarrow \eqref{diff}$: Let $C\subseteq G$ be finite and let $c\in C$ such that  $b\leq c$ for all $b\in C$. If $g\in G$ is non-trivial, then either $gc>c$ (and then $gc\notin C$ by the maximality of $c$), or $gc<c$ (and then $c<g^{-1}c$, which implies that $g^{-1}c\notin C$). Thus, $G$ is diffuse.

\noindent$\eqref{diff}\Rightarrow \eqref{upp}$: Let $A,B\subseteq G$ be finite subsets and let $c$ be an extremal element in the \emph{set} $C=AB$. Suppose $c=ab=a'b'$ with $a,a'\in A$ and $b,b'\in B$ such that $g=a'a^{-1}\ne 1$. Then $gab=a'b\in C$ and $g^{-1}ab=a(a')^{-1}ab=ab'\in C$, which contradicts $\eqref{diff}$. Thus, the extremal element in   $AB$ viewed as a  \emph{multiset} has multiplicity $1$, that is, $G$ satisfies the UPP.

   \noindent $\eqref{upp}\Rightarrow \eqref{uc}$: We prove the contrapositive.  Suppose $A,B\subseteq G$ are   the support sets for a non-trivial  unit $\aaa$ in $K[G]$ and its inverse $\beta$, respectively. By assumption, there exists $x\in A$ such that  $x^{-1} \in B$. By replacing $\aaa$ by $x^{-1}\aaa$ and  $\beta$ by $\beta x$, we can assume that $1\in A\cap B$. 
   
   Let $E=B^{-1}A$ and $F=BA^{-1}$; we show that $E$ and $F$ do not have the unique product propery. Since $|A|,|B|\geq 2$,  there exists a   $a\in A-\{1\}$, which shows that the product $1=1\cdot 1=a\cdot a^{-1}$ is not unique in $EF$. Now consider a non-identity product $x=( b_0^{-1} a_0 )(b_1 a_1^{-1}) $ in $EF$ where $a_0,a_1 \in A$ and $b_0,b_1\in B$. If $a_0=b_1=1$, then $x=b_0^{-1}a_1^{-1}=b_0^{-1}aa^{-1}a_1^{-1}$ is not unique. If $a_0b_1=1$ and $a_0\ne 1$, then $x= b_0^{-1}a_0b_1a_1^{-1}=b_0^{-1}a_1^{-1}$. Lastly, consider $a_0b_1\ne 1$. Since $A$ and $B$ are supports for a unit, there must exist $a_2\in A$ and $b_2\in B$ with $a_2\ne a_0$, $b_2\ne b_1$, and $a_0b_1=a_2b_2$; thus,  $ (b_0^{-1} a_0) (b_1 a_1^{-1}) =  (b_0^{-1} a_2) (b_2 a_1^{-1})$ is not unique. The claim follows.

   \noindent$\eqref{uc}\Rightarrow \eqref{zdc}$: Since $G$ is torsion-free, the only finite normal subgroup is the identity subgroup. Hence the ring $K[G]$ is \emph{prime} (see \cite[Theorem 4.2.10]{passman}), that is, if $IJ=0$ for  ideals $I,J $ in $K[G]$, then $I=0$ or $J=0$. Suppose, for a contradiction, that $u,v\in K[G]$ are zero divisors, and denote by $I$ and $J$ the principal ideals generated by $u$ and by $v$, respectively. Since $I$ and $J$ are non-zero, the prime property shows that $IJ\ne 0$. This, in turn, implies that there exists $r\in K[G]$ such that  $vru\ne 0$.  The element $t=vru$ therefore is non-zero and satisfies $t^2=0$ and $(1+t)(1-t)=(1-t)(1+t)=1$. Thus, $1-t$ is a unit in $K[G]$. By assumption, this unit must be trivial, say $1-t=kg$ for some $g\in G$ and $k\in K$. Now  $t=1-kg\in K[\langle g\rangle]$ is a non-zero element with $t^2=0$. Since $\langle g\rangle$ is infinite cyclic, the ring $K[\langle g\rangle]$ is isomorphic to the ring of univariate Laurent polynomials over $K$; in particular, it is an integral domain. But then $t^2=0$ forces $t=0$, a contradiction. Thus, $K[G]$ has no zero divisors.
\end{proof}

  Gardam's work \cite{G} shows that $\mathbb{F}_2[P]$ with $P$ as in \eqref{eq_P} is an integral domain that contains non-trivial units; this shows that ``\eqref{zdc} $\not\Rightarrow$(\ref{uc})'' in general. An example showing ``\eqref{diff} $\not\Rightarrow$\eqref{lo}'' is given by Raimbault, Kionke, and  Dunfield \cite{RKD}. It is an interesting open problem to find examples of the other non-implications. As mentioned, our experimental work aims at  an example of a torsion-free group that satisfies \eqref{uc} for some domain  $R$, but not \eqref{upp}.

Cliff \cite[Theorem 2]{cliff} showed that if $K$ is a field of characteristic $p>0$ and $G$ is a torsion-free polycyclic-by-finite group, then $K[G]$ has no zero divisors.


\section{Experimental results on units in $P$}
\subsection{The group $P$}   Recall from (\ref{eq_P}) that $P$ is the group given by the presentation
\begin{equation*}P = \langle a,b \mid b^{-1}a^2b = a^{-2}, a^{-1}b^{2}a=b^{-2}\rangle.
\end{equation*}
This presentation yields a complete set of rewrite rules: for  $\gamma= \pm 1$, $\delta= \pm 1$, $\varepsilon = \pm 2$,
 \begin{equation} \label{eqn:rewrite} a^{\varepsilon}b^\delta \leftrightarrow b^\delta a^{-\varepsilon} \qquad b^{\varepsilon}a^\delta \leftrightarrow a^\delta b^{-\varepsilon} \qquad a^\delta b^\gamma a^\delta \leftrightarrow a^{-\delta} b^\gamma a^{-\delta}\qquad b^\delta a^\gamma b^\delta \leftrightarrow b^{-\delta} a^\gamma b^{-\delta}. \end{equation}
 To give an idea of the size of search spaces, we  display  the   sizes for  the balls  and spheres  around $1$ in the Cayley graph of $P$, depending on the radius up to 6.  

\begin{center}
	\begin{tabular}{c|ccccccc}
		 & 0 & 1 & 2 & 3 & 4 & 5 & 6 \\
		\hline
		Radius      & 0 & 1 & 2 & 3 & 4 & 5 & 6 \\
		Ball size   & 1 & 5 & 17 & 41 & 83 & 147 & 239 \\
		Sphere size & 1 & 4 & 12 & 24 & 42 & 64 & 92 \\
	\end{tabular}
\end{center}

\subsection{Two nontrivial units} We replicate Gardam's breakthrough result \cite{G} and report on a few examples of non-trivial units that we have computed with a modification of the method described in   Remark~\ref{rem:Gardam SAT}.

\begin{example}\label{ex1}
  With \textcode{Kissat} \cite{kissat}, we were able to find a non-trivial unit whose support lies in a ball of radius $6$ in $P$. We used the $2$-generator description of~$P$ as a subgroup of $(D_\infty)^3$ given in \cite{P}. Here $D_\infty$ is the infinite dihedral group; see also the code for GAP \cite{gap} provided in Figure \ref{fig_P}. Our computation took about $5$ minutes on a 2020 M1 MacBook~Air.

  Recall that we write  $x= a^2, y= b^2, z= (ab)^2$, and  $\la x,y,z \ra$ freely generate the maximal  abelian normal subgroup of $P$, which  has index $4$.  The unit can be decomposed as  $p + qa + rb +sab$ with inverse $p' + q'a +  r'b +s'ab$, where $p,q,r,s, p',q',r',s' \in \mathbb F_2[\ZZ^3]$ are as below.  We view  $x,y,z$ as standard generators of $\ZZ^3$, and  write $\ol x$ for $x^{-1}$ etc.
\begin{eqnarray*}
p &=& 1 +\ol x + \ol z +   \ol y + \ol x\ol y + \ol y\ol z + \ol x\ol z +  \ol x\ol y\ol z   \\
q &=&  1 + \ol x z + \ol x y z + x^{-2} y \\
r &=& \ol y +  \ol x z + \ol x\ol y +  y^{-2} z \\
s &=&   \ol x\ol z + \ol x y + y\ol z + x^{-2} y\ol z + \ol x y^2\ol z\\[1ex] 
p' &=& 1 + x + y + z +  x y   + x z  + y z  + x y z  \\
q' &=&  x + z + y z + \ol x y \\
r' &=& 1 + \ol x   +  \ol y z + \ol x y z \\ 
s' &=&  y + \ol x + \ol x y\ol z + \ol x y^2  + x^{-2} y
\end{eqnarray*}
This unit  $u$ is essentially different from Gardam's: For instance, $s$ has a term of length $4$, and no easy transformation such as $u \to ux, u \to u^a$ reduces the minimal length of all the terms below $4$.  In contrast, the Laurent polynomials in Gardam's unit and its inverse have  all terms of length at most 3. Curiously, $p $ and $p'$ are very similar to Gardam's, and also the support size distribution is the same as for Gardam's, $|p|= |p'|=8, |q| = |q'|= |r| = |r'| = 4, |s|= |s'| = 5$.
\end{example}

\begin{figure}[h]
\begin{verbatim}
## define polycyclic presentation for supergroup (D_\infty)^3 of P
   coll := FromTheLeftCollector(6);
   for i in [1,3,5] do SetRelativeOrder(coll,i,2); od;     
   SetConjugate(coll,2,1,[2,-1]);; 
   SetConjugate(coll,4,3,[4,-1]);;   
   SetConjugate(coll,6,5,[6,-1]);;
   D  := PcpGroupByCollector(coll);; 
## define P and group ring over GF(2)
   P  := Subgroup(D,[D.2*D.3*D.5,D.1*D.4*(D.6*D.5)]);;
   FG := GroupRing(GF(2),P);;        
   a  := P.1*One(FG);;               
   b  := P.2*One(FG);;
## get two non-trivial units
   U := Sum([ b*a^3, b*a^2*(b*a^-1)^2, b*a^2*b^-2, b*a*b^2,
              b*a*(a*b^-1)^2, b*a*b^-2, b^-1, b^-2*a^-1*b*a^-1,
              b*a^-1, a, (b*a^-1)^2, One(FG), b^-1*a^-1*b*a^-1,  
              b^-2, (a^-1*b)^2*a^-1, a^-1*b^-1*a^-1*b*a^-1,
              a^-1*(a^-1*b)^2*a^-1, a^-2, a^-2*b^-1*a^-1*b*a^-1,
              a^-2*b^-2, a^-3*b^-2 ]);;
   V := Sum([ b*a^3*(a*b^-1)^2, b*a^2*b^2*(b*a^-1)^2, b*a^2,
              b*a*b^2*(a*b^-1)^2, b*a, b*a*b^-1*(b^-1*a)^2*b^-1, b,
              a^-1*b*a^-1, a*b^-1, a^3, a^2*b^2, a^2*b^2*(a*b^-1)^2,
              a^2, a^2*(a*b^-1)^2, a*(b*a^-1)^2, a*b^-1*a^-1*b*a^-1, 
              b^2, b^2*(a*b^-1)^2, One(FG), (a*b^-1)^2, a^-1*b^-2 ]);;
## check that they multiply to the identity
   U*V = U^0;
## true
\end{verbatim}
\caption{GAP code for Example \ref{ex1} that demonstrates that $\mathbb{F}_2[P]$ has non-trivial units; the group $P$ is constructed following  \cite{P}.}\label{fig_P}
\end{figure}

\begin{example}Using the decomposition in Example \ref{ex1}, another   unit is $\mathbb{F}_2[P]$ is defined by  
\begin{eqnarray*}
p &=& y+ z+\ol xz+\ol x\ol y  \\
q &=&  1+\ol x\ol z+\ol xy+\ol x\ol y+ x^{-2}  \\
r &=&1+\ol y+ x+ x\ol y+\ol yz+ x\ol yz+ z, xz \\
s &=&   y\ol z+\ol x+\ol z+ xy\\[1ex]
p' &=& y+x\ol y+\ol z+ x\ol z  \\
q' &=&\ol x+ x+\ol y+ y+\ol z \\
r' &=&  1+\ol y+ x+ x\ol y+\ol yz+ x\ol yz+ z+ xz \\ 
s' &=&  1+ y+\ol xy\ol z+ x\ol z.
\end{eqnarray*}
\end{example}

 \subsection{Automorphisms of $P$ fixing $\{a, b, a^{-1}, b^{-1}\}$} \label{ss:automorphism P}  Note that  the assignment on generators $a \mapsto b$, $b \mapsto a$  extends to  an automorphism $\pi \colon P \to P$.  Also,  any  automorphism of a group $G$  extends naturally to a group ring $K[G]$. Below we will  discuss    nontrivial units $\alpha$  in $\mathbb F_2[P]$ with inverse $\pi(\alpha)$:
 \begin{definition}  \label{df:swap} For a field $K$,  a \textit{nontrivial} unit  $U$ in $K[P]$ is a \textit{swap unit}  if its inverse is obtained by applying the swap automorphism~$\pi$, that is, $U^{-1}= \pi(U)$.  \end{definition} Let $\alpha$ be the automorphism given by $a \mapsto a^{-1}, b \to b$, and  let $\beta $  be the automorphism of $P$ given by $a \mapsto a, b \to b^{-1}$. The group of automorphisms of $P$ that fix the set  $\{a, b, a^{-1}, b^{-1}\}$ is \begin{equation} \label{eqn:S} S=\la \alpha, \beta \ra \rtimes \la \pi \ra,\end{equation} 
  which has size $8$.

 \subsection{The  nontrivial units of   $\mathbb{F}_2[P]$   on a ball of radius~4}
\label{ss:rad4}

  Our computations showed that  there exist no nontrivial units whose support lies in a ball of radius $3$ around
  $1_P$  in the Cayley graph of $P$. There exist exactly 36 units
  whose support lies in a ball of radius $4$ around $1_P$.  They are displayed in Table~\ref{table:radius 4} in the appendix.   We note  that $V_1$,  the inverse of  $U_1$, is obtained by replacing $baba$ by its inverse $abab$ in $P$; the same holds for $V_2$ versus $U_2$.  
  
  We enumerated all such
  units by expressing the unit and its inverse as a satisfying assignment to a SAT formula, and then using \textcode{TabularAllSat}~\cite{SpallittaBiereSebastiani2025,TabularAllSATCode} to enumerate all satisfying assignments. The
  enumeration procedure took approximately~$2$ hours. The enumeration
  procedure on a ball of radius $5$ did not terminate within one day.  We remark that the choice of a description for $P$ significantly
  affects the behaviour of \textcode{Kissat}. While the search for units
  in a ball generated according to~\cite{P} terminates in a few minutes,
  the search in a ball generated according to~\cite{G} does not terminate
  within one day.

  We used GAP to determine the orbits under the group $S$ from (\ref{eqn:S}) among the units in radius $4$:  
  \[
  \begin{array}{l}
  	\{U_1, V_1, U_2, V_2\} \\
  	\{U_3, U_4, V_5, U_8, U_{10}, V_{13}, V_{14}, V_{18}\} \\
  	\{V_3, V_4, U_5, V_8, V_{10}, U_{13}, U_{14}, U_{18}\} \\
  	\{U_6, U_7, V_9, U_{11}, U_{12}, V_{15}, V_{16}, V_{17}\} \\
  	\{V_6, V_7, U_9, V_{11}, V_{12}, U_{15}, U_{16}, U_{17}\}
  \end{array}
  \]
  The first orbit is given by the facts that $U_1$, $U_2$ are swap units and that    $\alpha(U_1) = \beta(U_1)= U_2$ (see Subsection~\ref{ss:automorphism P} for the automorphisms  $\alpha $ and $\beta$ of $P$).  The first orbit is closed under inverses, the third is obtained by  inverting the elements  of the second, and the fifth by  inverting the elements  of the fourth.   So there are only three essentially different units (not counting inverses as separate), for instance represented by $U_1, U_3$ and $U_6$. We note that  even under the action of $\Aut(P)$, the unit $U_1$ is not in the same orbit as $U_3$ or $U_6$, because $U_1$ contains five squares, and $U_3$ and $U_6 $ only two.

    \subsection{Swap  units  of   $\mathbb{F}_2[P]$  on balls of radius~5 and 6}
 \label{ss:rad5and6}
The  units  $U_1,V_1,U_2,V_2$ in Table~\ref{table:radius 4} are the only swap units supported on the ball of  radius $4$ around $1_P$.   
We next searched for the swap units supported on the ball of \textit{radius $5$ around~$1_P$}, and determined that there are  exactly~20.  This computation took only a few  seconds, reflecting that predetermining the inverse roughly halves the number of variables in the SAT formula for which we find all satisfying assignments. 

  Being a  swap unit is not in general preserved by applying $\alpha$ or $\beta$. Rather, for a swap unit $U$, since $\pi \circ \beta= \alpha \circ \pi$, we have $\alpha(U)^{-1}= \alpha(\pi(U))= \pi (\beta(U))$. So   $\alpha(U)$ is a swap unit iff  $\alpha(U)= \beta(U)$ iff $\beta(U)$ is a swap unit. This holds for instance for $U_1$ as mentioned above.  Let $T$ be the four-element subgroup of $S$ generated by $\pi$ and $\alpha \circ \beta$. Since $\alpha \circ \beta$ commutes with $\pi$, any automorphism in $T$ preserves being a swap unit. Using GAP we determined that the  set of units supported on a ball of radius $5$ but not   radius~$4$ around $1_P$  is partioned into    four  $T$-orbits of length $4$. In particular, no such unit $W$ satisfies $\alpha(W) = \beta(W)$.  A choice of   representing units for these orbits is displayed in Table~\ref{table:radius 5}. 
All the   units  contain $1$ and   have exactly one element with shortest representation of length $5$. 
 \setlength{\fboxsep}{1pt}
\setlength{\fboxrule}{0.8pt}
\begin{table}[h]
	\caption{Four  swap units   on  a ball of radius 5 but not radius 4,  representing the four $T$-orbits of such  units.} 
	\label{table:radius 5}

	{\fontsize{11}{8.5}\selectfont
		\begin{center}
			\setlength{\tabcolsep}{3pt}
\begin{tabular}{|c|c|c|c|c|c|c|c|c|c|c|}
	\noalign{\hrule height .04cm}
	\fbox{$W_{1}$} & $1$ & $a$ & $ab$ & $b^{2}$ & $ba^{-1}$ & $a^{-2}$ & $a^{-1}b^{-1}$ & $a^{2}b^{-1}$ & $ab^{-1}a$ & $ab^{-1}a^{-1}$ \\
	\hline
	$bab$ & $bab^{-1}$ & $b^{3}$ & $a^{3}b^{-1}$ & $abab$ & $ab^{-1}ab$ & $ab^{-3}$ & $ba^{2}b$ & $a^{-1}bab$ & $a^{-1}b^{-1}ab$ & $ba^{2}ba^{-1}$ \\
	\hline
	\noalign{\hrule height .04cm}
	\fbox{$W_{2}$} & $1$ & $b^{-1}$ & $ab$ & $b^{2}$ & $a^{-2}$ & $a^{-1}b^{-1}$ & $b^{-1}a$ & $ab^{2}$ & $ab^{-1}a$ & $bab$ \\
	\hline
	$a^{-3}$ & $a^{-1}b^{-1}a$ & $b^{-1}ab$ & $abab$ & $ab^{-1}ab$ & $ba^{2}b$ & $a^{-1}ba^{2}$ & $a^{-1}bab$ & $a^{-1}b^{3}$ & $a^{-1}b^{-1}ab$ & $ba^{2}b^{2}$ \\
	\hline
	\noalign{\hrule height .04cm}
	\fbox{$W_{3}$} & $1$ & $a$ & $ab$ & $b^{2}$ & $a^{-2}$ & $a^{-1}b^{-1}$ & $b^{-1}a$ & $ab^{-1}a$ & $bab$ & $b^{3}$ \\
	\hline
	$a^{-2}b^{-1}$ & $a^{-1}b^{-1}a$ & $b^{-1}ab$ & $abab$ & $ab^{-1}ab$ & $ba^{2}b$ & $a^{-1}ba^{2}$ & $a^{-1}bab$ & $a^{-1}b^{3}$ & $a^{-1}b^{-1}ab$ & $a^{-1}ba^{2}b$ \\
	\hline
	\noalign{\hrule height .04cm}
	\fbox{$W_{4}$} & $1$ & $b^{-1}$ & $ab$ & $b^{2}$ & $ba^{-1}$ & $a^{-2}$ & $a^{-1}b^{-1}$ & $ab^{-1}a$ & $ab^{-1}a^{-1}$ & $ab^{-2}$ \\
	\hline
	$bab$ & $bab^{-1}$ & $a^{-3}$ & $a^{3}b^{-1}$ & $abab$ & $ab^{-1}ab$ & $ab^{-3}$ & $ba^{2}b$ & $a^{-1}bab$ & $a^{-1}b^{-1}ab$ & $a^{2}b^{3}$ \\
	\hline
	\noalign{\hrule height .04cm}
	
\end{tabular}
		\end{center}
	}
\end{table}

There are 80 swap units supported on the ball of \textit{radius $6$ around} $1_P$, The set of  60 swap  units supported on radius 6 but not radius 5 around $1_P$  is partitioned into  12 $T$-orbits of length $4$, and 6 $T$-orbits of length $2$ (equivalently, 3 $S$-orbits of length 4).

All nontrivial units  we displayed  above have a  support of size $21$; the same holds for   Gardam's original one~\cite{G}. However, in radius 6 we found   four swap units with support size 81. Two of them   are displayed  in Table~\ref{table:radius 6} in the appendix, the other two are their  inverses. We also found four swap units of support size 57; the remaining ones have support size 21.

\section{Fibonacci groups}\label{sec_fib}
\noindent We now consider a class of groups many of which are   torsion-free  and not left-orderable;  these groups are potential counterexamples to  the various conjectures. Following Johnson \cite[p.\ 74]{johnson},  for  integers $2\leq r<n$,  the \emph{Fibonacci group} $F(r,n)$  is defined by 
\begin{align}\label{eq_fib}F(r,n)&=\langle\; x_1,\ldots,x_n \mid x_ix_{i+1}\cdots x_{i+r-1}x_{i+r}^{-1}\quad (0\leq i\leq n-1)\;\rangle, 
\end{align}
where subscripts are understood to be modulo $n$ such that all elements $x_j$ lie in  $\{x_1,\ldots,x_n\}$.  These groups and their generalisations   have been studied extensively in the literature; see \cite{kim,thomas} and the references therein for background. We only summarize a few facts.

\begin{enumerate}
	\item   Let $d=\gcd(r+1,n)$. The group $F(r,n)$ is infinite whenever either $d>3$, or $d=3$ and $n$ is even; see \cite{thomas}.  Furthermore,  $F(r,n)$ is infinite whenever $n>5r$;  see \cite[p.\ 76]{johnson}. 
	\item Every finite group is a quotient of some group $F(r,n)$.  
	\item $F(2,6) \cong P$ via $a= x_1 x_2, b= x_1 x_2^2$. 
	\item  $F(2,n)$   has torsion  for odd $n$, $F(2,2m)$ is torsion-free for $m\geq 3$; see  \cite[p.\ 84]{johnson} and \cite[(P4)]{helling}.  
\end{enumerate}
 Fox \cite{fox} proved that $F(2,2m)$ is not right-orderable (which is equivalent to being left-orderable) for $m\geq 2$, and   asked which of these groups satisfy the UPP.

 \subsection{The groups $H_n=F(n-1,n)$ for   $n \ge 4$} 
We write $H_n=F(n-1,n)$  for   $n \ge 4$. By what is said above, such $H_n$ is infinite.   Furthermore, $H_n$ is not left-orderable for $n\geq 4$ by  \cite[Lemma 4]{DPT}. The groups $H_n$ for even $n \ge 4$ are the fundamental groups of 3-manifolds, and torsion free \cite[Proposition 2b.ii]{DPT}. By   recent work~\cite{FisherSanchezPeralta2023} they satisfy the zero divisors conjecture for any domain.

The proof of \cite[Lemma 4]{DPT} mentions that  $x_i^2=x_1\ldots x_n$ for each $i$ in $F(n-1,n)$: For $i=1$ this follows directly from the relator $x_2\ldots x_nx_1^{-1}$; for $i=2$, multiply $x_3\ldots x_nx_1=x_2$ from the left by $x_2$ and use that $x_2\ldots x_n=x_1$, which yields $x_1^2=x_2^2$; the other relations $x_1^2=x_i^2$ follow similarly.  Thus, the subgroup $N$ of $H_n$ generated by $x_1^2$ is central in $H_n$. Tietze transformations \cite[Section 2.4.4]{handbook} can be used to show that
\begin{align}\label{eq_Hn}H_n=\langle x_1,\ldots,x_{n}\mid x_1^2=\ldots=x_n^2=w_n\rangle\quad\text{where}\quad w_n=x_1\ldots x_n.
\end{align}
Let $K_n$ be the free product of the free cyclic groups generated by $x_1,\ldots, x_n$, respectively, with amalgamation $x_1^2=\ldots=x_n^2$, that is,
\begin{align}\label{eq_Kn}K_n=\langle x_1,\ldots,x_{n}\mid x_1^2=\ldots=x_n^2 \rangle. 
\end{align}
Note that $H_n\cong K_n/N$,  where   $N$ is the normal closure of $x_1^{-1}x_2\ldots x_n$ in $K_n$.   It follows from \cite[Theorem 11.68]{rotman} that
\begin{align}\label{eq_fo} g\in K_n\text{ has finite order} \iff g^h\in \langle x_i \rangle \text{ for some $i$ and $h\in K_n$}.
\end{align}
 
Since each group $\langle x_i\rangle$ is torsion-free, this implies  that  $K_n$ is torsion-free as well. We now show that elements in $K_n$ have the following normal form.
\begin{lemma} \label{lem Kn}
There is an algorithm that can rewrite  every element in $K_n$ into a unique normal form  $x_{i_1}\ldots x_{i_k}x_n^z$ where successive $i_u$ and $i_{u+1}$ are distinct, $i_k\ne n$, and $z$ is some integer. 
\end{lemma}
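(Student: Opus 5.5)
The plan is to treat $K_n$ as an amalgamated free product and read off the normal form from the standard normal form theorem for such products, as in \cite[Theorem 11.68]{rotman}. Write $c=x_1^2=\cdots=x_n^2$. By \eqref{eq_Kn}, $K_n$ is the free product of the infinite cyclic groups $A_i=\langle x_i\rangle$, amalgamated along the common subgroup $C=\langle c\rangle$, which has index $2$ in each $A_i$. The element $c$ commutes with every $x_i$, since $c=x_i^2$. Hence $C$ is central in $K_n$. In each factor $A_i$ I choose the right transversal $\{1,x_i\}$ of $C$. The normal form theorem then says that every $g\in K_n$ can be written uniquely as
\[ g = x_{j_1}x_{j_2}\cdots x_{j_m}\,c^{e}, \]
where consecutive indices $j_u\ne j_{u+1}$ are distinct and $e\in\mathbb Z$.

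\textbf{Existence (the algorithm).} Start from an arbitrary word in the $x_i^{\pm1}$.
\begin{itemize}
\item Replace each $x_i^{-1}$ by $x_i c^{-1}$.
\item Use centrality to move every power of $c$ to the right end.
\item Repeatedly replace an adjacent pair $x_ix_i$ by $c$ and again move $c$ to the right.
\end{itemize}
Each such step shortens the word in the $x_i$, so the process terminates in the form $x_{j_1}\cdots x_{j_m}c^e$ with consecutive indices distinct.

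\textbf{Conversion to the stated shape.} Write $c^e=x_n^{2e}$.
\begin{itemize}
\item If $j_m\ne n$, the normal form is $x_{j_1}\cdots x_{j_m}x_n^{2e}$, so $k=m$ and $z=2e$.
\item If $j_m=n$, absorb the last letter: the form is $x_{j_1}\cdots x_{j_{m-1}}x_n^{2e+1}$, so $k=m-1$ and $z=2e+1$. Here $j_{m-1}\ne n$ because consecutive indices are distinct.
\end{itemize}
This gives a bijection between the amalgam normal forms and the words $x_{i_1}\cdots x_{i_k}x_n^z$ with consecutive indices distinct and $i_k\ne n$. To invert it, look at $z$: if $z$ is even, set $e=z/2$ and append nothing; if $z$ is odd, append $x_n$ and set $e=(z-1)/2$. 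So uniqueness of the amalgam normal form transfers to uniqueness of the stated form.

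\textbf{Main obstacle.} The only nontrivial ingredient is uniqueness, that is, the normal form theorem itself. I would cite it, as the paper already relies on \cite{rotman}. If a self-contained argument is wanted, I would use van der Waerden's trick. Let $\Omega$ be the set of reduced sequences $(j_1,\dots,j_m;e)$. Define an action of each $x_i$ on $\Omega$, multiplying on the left:
\begin{itemize}
\item if $j_1\ne i$ (or $m=0$), then $x_i$ prepends $i$;
\item if $j_1=i$, then $x_i$ deletes $j_1$ and increases $e$ by $1$.
\end{itemize}
Next check that $x_i$ acts bijectively and that $x_i^2$ acts as $e\mapsto e+1$, independently of $i$. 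This shows the relations of $K_n$ hold, so we get an action of $K_n$ on $\Omega$. Finally, the normal form of $g$ is recovered as the image of the empty sequence $(\,;0)$ under $g$. Hence distinct normal forms represent distinct elements.
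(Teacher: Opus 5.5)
Your proof is correct. Its existence half matches the paper's: you trade even powers for powers of $x_n$ and push them to the right using centrality. Uniqueness, however, is obtained differently.

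The paper argues by hand. It equates two forms, rewrites $x_j^{-1}=x_jx_n^{-2}$, and cancels coinciding letters $x_{j_g}x_{i_g}=x_n^2$ in the middle. It then rules out the leftover relation $1=(\text{nonempty reduced word})\cdot x_n^{t}$ by appealing to the torsion criterion \eqref{eq_fo}.

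You instead pass through the amalgam normal form, using the transversals $\{1,x_i\}$ of the central subgroup $C=\langle x_n^2\rangle$. You then transfer uniqueness to the paper's shape by the parity bijection that absorbs a final letter $x_n$ into the power of $x_n$. This is the route the paper only gestures at in its first sentence.

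The paper's argument is shorter and never leaves its own normal form. But its step ``the trivial element must lie in a free factor, which is impossible if $j_1\ne i_1$'' really uses that a nonempty reduced word times a power of $x_n$ does not lie in a factor. That is in substance the amalgam normal form theorem itself. Your van der Waerden action on reduced sequences proves exactly this from scratch and handles all $n$ factors at once, so it is the more self-contained of the two arguments.

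If you prefer to cite rather than use the permutation argument, note that the normal form theorem is usually stated for two factors. The $n$-factor case follows by induction from $K_n\cong K_{n-1}\ast_C\langle x_n\rangle$ with $C=\langle x_1^2\rangle=\langle x_n^2\rangle$. Here $x_1^2$ has infinite order in $K_{n-1}$ because of the homomorphism to $\mathbb{Z}$ sending every $x_i$ to $1$.
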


\begin{proof} 
Given that $x_n^2$ is central,  the existence of a normal form follows directly from a  generalisation of \cite[Theorem IV.2.6]{LS} from an amalgamation of $2$ to  $n$ groups. However, to expose the algorithmic content we give a short direct proof. Since each $x_i^2=x_n^2$ is central in $K_n$, each  element in $K_n$ can be transformed into the form   $x_{i_1}\ldots x_{i_k}x_n^z$ as above   by replacing all even powers of $x_j$ by the same  power of $x_n$, and moving all these powers of $x_n$ to the right.  For  the uniqueness, suppose  that $x_{i_1} \ldots x_{i_k}  x_n^z =   x_{j_1} \ldots x_{j_m}  x_n^s$ are two such forms.   If $k=m=0$, then we started with $x_n^z=x_n^s$, which is an equation in the free cyclic group $\langle x_n\rangle$, hence $z=s$. If $k=0$ and $m\ne 0$ (or vice versa), then we have an equation $x_{j_1} \ldots x_{j_m}=  x_n^{z-s}$. Since $m\ne 0$, this forces $s-z=0$ and $x_{j_1} \ldots x_{j_m}=1$, but the latter is not possible by \eqref{eq_fo}. Thus, we can assume that $0<m\leq k$, and rewrite our element equality as 
  \[(\ast)\quad 1=x_{j_m}^{-1}\ldots x_{j_1}^{-1}x_{i_1} \ldots x_{i_k}  x_n^{z-s}=x_{j_m}\ldots x_{j_1}x_{i_1}\ldots x_{i_k} x_n^{z-s-2m}.\]This element has order $1$, and therefore must lie in one of the free factors, say in $\langle x_\ell\rangle$. This is not possible if $j_1\ne i_1$. If $j_1=i_1$, then we can replace $x_{j_1}x_{i_1}=x_n^2$ and move it to the right, and we iterate. So either we can rewrite $(\ast)$ as $1=x_n^{z-s-2m+2m}$, in which case uniqueness follows, or we reach  a word $x_{j_m}\ldots x_{j_g}x_{i_g}\ldots x_{i_k} x_n^{z-s-2m+2(g-1)}=1$ with $x_{j_g}x_{i_g}\notin \{1,x_n^2\}$, or a word $x_{i_{m+1}}\ldots x_{i_k} x_n^{z-s}=1$ with $m+1\leq k$. Both cases are not possible by \eqref{eq_fo}.
\end{proof}

\subsection{Each $H_n$ has a solvable word problem}
Recall that the \textit{word problem} of a finitely presented group $G$  is to decide membership in the set  of  free group words in the generators of the group that  equal the identity in $G$. The word problem is \textit{solvable} if  an algorithm exists to decide this membership. If this set is merely recursively enumerable, one  says that  $G$ is \textit{recursively presented}.

\begin{remark} \label{Malcev} If the word problem for $G$ is solvable, the group is computable in the usual sense of computable algebra: there is a bijection  $\theta \colon G \to \mathbb N$ such that the images under $\theta$ of group products are computable. 
 \end{remark}

\begin{lemma} \label{lem: rec presented} Let $H$ be a recursively presented group and let   $w\in Z(H)$  be a non-trivial central element. If $L= H/\la w \ra$ has a solvable word problem, then so does~$H$.
\end{lemma}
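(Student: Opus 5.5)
The plan is the standard \emph{search-and-decide} argument. Fix a word $\hat w$ in the generators of $H$ representing $w$. Note that $\langle w\rangle$ is normal because $w$ is central, so $L=H/\langle w\rangle$ is a group and a finite presentation of $L$ is obtained from that of $H$ by adding the relator $\hat w$. Given an input word $v$ in the generators of $H$, I would first run the algorithm for the word problem of $L$ on $v$.

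If $v\neq 1$ in $L$, then certainly $v\neq 1$ in $H$, and we output ``no''. If $v=1$ in $L$, then $v\in\langle w\rangle$ in $H$, so $v=w^k$ in $H$ for some $k\in\mathbb Z$. Since $H$ is recursively presented, the set of words equal to $1$ in $H$ is recursively enumerable. We therefore enumerate this set and, in parallel, the integers $k$ (dovetailing), until we find $k$ with $v\hat w^{-k}=1$ in $H$. This search is guaranteed to halt because such a $k$ exists.

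It remains to decide whether $w^k=1$ in $H$. This is the only delicate step, since the $k$ found by the search is not canonical when $w$ has finite order. I would handle it non-uniformly, which is legitimate because we only need the \emph{existence} of an algorithm for the fixed group $H$ and the fixed element $w$. Either $w$ has infinite order, in which case $k$ is unique and $v=1$ iff $k=0$; or $w$ has some finite order $m>1$ (here $m>1$ because $w$ is non-trivial), in which case $v=1$ iff $m\mid k$. The algorithm hard-codes which case holds and, in the second case, the value of $m$. (In the application to $H_n$, the group $H_n$ is torsion-free for even $n$, so only the first case arises and no hard-coding is needed there.) I expect no serious obstacle beyond noting this non-uniformity; the key points are that the search terminates because membership in $\langle w\rangle$ has already been certified via $L$, and that recursive enumerability of the relations of $H$ is exactly what makes the search for $k$ effective.
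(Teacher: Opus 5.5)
Your proposal is correct and follows the same argument as the paper: decide triviality in $L$, use the recursive enumerability of $H$'s relations to find some $k$ with $v=w^k$, and then test $k\equiv 0$ modulo the fixed order of $w$, where infinite order means testing $k=0$. You make explicit two points the paper leaves implicit, namely that the algorithm is non-uniform and that any $k$ found will do; the only nit is that $L$ inherits a recursive, not necessarily finite, presentation from $H$, which does not affect the argument.
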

\begin{proof}  Suppose $w$ has order $t\geq 2$, where $t=\infty$ is allowed.  Let $u$ be a word in the generators of $H$. Since the generators of $H$ map onto the generators of $L$, and $L$ has solvable word problem, we can check whether $u$ represents the identity in $L$. If not, we have determined that $u$ is not the identity in $H$. If $u$ does represent the identity in $L$, then we know that $u$ represents a non-trivial element in $\langle w\rangle$, so $u=w^r$ for some  $r\in \ZZ$.  Since $H$ is recursively presented, there is an algorithm that find this $r$. Note that  $r\equiv 0\bmod t$ if and only if $u$ is the identity in $H$. Thus,  we can decide the word problem in~$H$. 
  \end{proof}  

We now show that in $H_n$ we can solve the word problem. While not strictly necessary to compute with $H_n$, having a solution to the word problem does often significantly improve computational capabilities. We continue with the notation of the introduction of this section; recall that $w_n$ is central in $H_n$ since  each $x_i^2=w_n$.

  \begin{theorem}\label{Hn_solv_wp} The group $H_n$ has solvable word problem for $n\geq 4$. \end{theorem}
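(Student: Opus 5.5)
The plan is to apply Lemma~\ref{lem: rec presented} to the central element $w_n=x_1\cdots x_n$ of $H=H_n$. This reduces the word problem in $H_n$ to the word problem in the quotient $L_n=H_n/\langle w_n\rangle$.

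First, $H_n$ is finitely presented by \eqref{eq_fib} or \eqref{eq_Hn}, so it is recursively presented. We showed above that $x_i^2=w_n$ for every $i$, so $w_n$ is central. If $w_n$ happens to be trivial in $H_n$, then $H_n=L_n$ and there is nothing to reduce. Otherwise the lemma applies directly. This is why I will not try to prove that $w_n\neq 1$. (For even $n$ it does hold, since $H_n$ is torsion-free and infinite, so $x_1$ cannot have order dividing $2$.)

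Next I compute a presentation of the quotient. From \eqref{eq_Hn},
\[
L_n=\langle x_1,\ldots,x_n \mid x_1^2=\cdots=x_n^2=1,\ x_1x_2\cdots x_n=1\rangle .
\]
This is the orbifold fundamental group of a $2$-sphere with $n$ cone points of order $2$. Equivalently, it is the orientation-preserving subgroup generated by the products of consecutive reflections in the reflection group of an $n$-gon with all angles $\pi/2$. Its orbifold Euler characteristic is $2-n/2$, which vanishes for $n=4$ and is negative for $n\ge 5$.

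I then split into two cases.
\begin{itemize}
\item For $n=4$, $L_4$ is a Euclidean crystallographic group. It contains the translation subgroup $\mathbb Z^2$ with index $2$, so it is virtually abelian. I would give an explicit normal form, such as a semidirect product $\mathbb Z^2\rtimes C_2$, and decide the word problem by computing in that normal form.
\item For $n\ge 5$, $L_n$ is a cocompact Fuchsian group, and hence word-hyperbolic. Therefore Dehn's algorithm solves its word problem. Alternatively, $L_n$ is a finitely presented residually finite group, being finitely generated and linear, so McKinsey's argument applies. A third option is to verify a small cancellation condition for the relator $x_1\cdots x_n$ over the free product of the groups $\langle x_i\rangle\cong C_2$, which holds for $n$ large enough.
\end{itemize}

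The main obstacle is making the identification of $L_n$ with a discrete planar group fully rigorous. I would argue via Poincar\'e's polygon theorem for the right-angled $n$-gon, which is Euclidean for $n=4$ and hyperbolic for $n\ge5$. Its reflection group has presentation $\langle r_1,\ldots,r_n\mid r_i^2,(r_ir_{i+1})^2\rangle$. I would check by Tietze transformations that $x_i\mapsto r_ir_{i+1}$ identifies $L_n$ with the index-$2$ rotation subgroup. Once that identification holds, solvability of the word problem in $L_n$ follows from the standard facts above, and Lemma~\ref{lem: rec presented} gives the theorem.
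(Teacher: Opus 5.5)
Your proof is correct. It shares the paper's first step, the reduction via Lemma~\ref{lem: rec presented} to $L_n=\langle x_1,\dots,x_n\mid x_i^2,\ x_1\cdots x_n\rangle$, and then takes a genuinely different route.

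The paper stays combinatorial and uniform in $n$. For any $r\in\{2,\dots,n-2\}$ it writes $L_n\cong A*_{\mathbb{Z}}B$, where $A$ and $B$ are free products of copies of $C_2$ amalgamated along $x_1\cdots x_r=x_n\cdots x_{r+1}$ (which has infinite order in both factors). It then solves the word problem with the amalgam normal form and explicitly computable transversals. This is self-contained and directly implementable, which suits the paper's computational aims.

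You instead identify $L_n$ with the orbifold group of $S^2(2,\dots,2)$ and invoke heavier theory: Poincar\'e's polygon theorem, then Dehn's algorithm, or Malcev plus McKinsey. This costs a case split at $n=4$. In return you get structural information the paper does not extract. First, $L_4\cong\mathbb{Z}^2\rtimes C_2$ is the wallpaper group $p2$; this matches Proposition~\ref{prop_pc}, since $w_4=\tilde z$ and $H_4/\langle z\rangle$ is exactly this group. Second, for $n\geq 5$, $H_n$ is an extension of a hyperbolic, linear, residually finite group by the central cyclic subgroup $\langle w_n\rangle$.

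On the step you flag as the main obstacle: Tietze moves alone do not give a presentation of a subgroup, so you need Reidemeister--Schreier. Alternatively, argue as follows. Put $g_i=x_1\cdots x_{i-1}$ and let $t$ act on $L_n$ by the involution $x_i\mapsto g_ix_ig_i^{-1}$. Then check directly that $r_i\mapsto tg_i$ and $x_i\mapsto r_ir_{i+1}$, $t\mapsto r_1$ (indices mod $n$) are mutually inverse isomorphisms between $W=\langle r_1,\dots,r_n\mid r_i^2,(r_ir_{i+1})^2\rangle$ and $L_n\rtimes\langle t\rangle$.

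Once $L_n$ embeds in the right-angled Coxeter group $W$, Tits' solution of the word problem for Coxeter groups finishes the proof for all $n\geq 4$ at once. So the geometry is needed only for the extra structure, not for decidability.
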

  
  \begin{proof} Define $L_n = H_n/\la w_n \ra$. In $L_n$, each $x_i^2=1$, and so the relator $w_n=1$ can be written as  $x_1 \ldots x_r =  x_{n}^{-1} \ldots   x_{r+1}^{-1}=  x_{n} \ldots   x_{r+1}$ for any $r\in\{2,\ldots,n-2\}$. Thus, we have a presentation
  \bc $L_n = \la\; x_1,  \dots , x_n \mid x_1^2,\;\ldots,\;x_n^2,\; x_1 \ldots x_r =  x_{n} \ldots   x_{r+1} \;\ra$,  \ec
which shows that $L_n\cong A*_\ZZ B$ is an amalgamated free product where 
 \bc $A= \la\; x_1, \ldots, x_r \mid x_1^2,\ldots,x_r^2 \;\ra \quad\text{and}\quad B= \la\; x_{r+1}, \ldots, x_n \mid x_{r+1}^2,\ldots,x_n^2\; \ra$, \ec
 and $\ZZ$ is embedded into $A $ via $1 \mapsto x_1 \ldots x_r$ and into $B$ via $1 \mapsto   x_{n} \ldots x_{r+1} $. Indeed, since $r$ lies in $\{2,\ldots, n-2\}$, each $A$ and $B$ is a free product of at least two cyclic groups of order $2$, and \eqref{eq_fo} implies that $x_1\ldots x_r$ and $x_{r+1}\ldots x_n$ do not have finite order in $A$ and in $B$, respectively. 
 
 By Lemma \ref{lem: rec presented}, it suffices to show that $L_n$ has solvable word problem. Note that $A,B$ have solvable word problem by an argument similar to the one in Lemma~\ref{lem Kn}. We  now   proceed via the  usual normal form of elements in an amalgam of two groups.  
 Let $U = A\cap B$ and note  \[U=\langle x_1\ldots x_r\rangle = \langle x_{r+1}\ldots x_n\rangle.\] Below we will pick suitable right coset representatives $\mathcal{S}$ of $U$ in $A$,  and $\mathcal{T}$ of $U$ in $B$, both containing~$1$. We first show that each element of $L_n$ can be uniquely written as $u s_1t_1 \ldots s_\ell t_\ell$ where $u\in U$ and each $s_i\in \mathcal{S}$ and $t_i\in \mathcal{T}$, and the $s_i, t_j$ are non-trivial with the possible exceptions $s_1$ and $t_\ell$:
  note that every element in $ A*_\ZZ B$ has the form $a_1b_1\ldots a_\ell b_\ell$ with each $a_i\in A$ and $b_i\in B$; starting from the right, replace $b_\ell = ub'_\ell$ with $u\in U$ and $b'_\ell\in \mathcal{T}$. Recall that $u\in A\cap B$, so next we replace $a_\ell u = va_\ell'$ with $v\in U$ and $a_\ell'\in \mathcal{S}$. An iteration of this process yields the required form. 

In order to   write an algorithm to create and multiply normal forms, we need to discuss the computability of $\mathcal{S}$; the discussion for $\mathcal{T}$ will be similar.  Recall that a  normal form for elements of $A$ is given by words $x_{k_1} \ldots x_{k_m}$ where each $k_i \in \{1, \ldots, r\}$ and $k_i \neq k_{i+1}$. Dropping the $x$'s, we can describe this as a sequence $\sss=[k_1, \ldots, k_m]$ of numbers in $\{1, \ldots, r\}$ as above. Multiplication of two sequences is induced by multiplication by elements in $A$, with the cancellation rules $x_i^2=1$ for each $i$. Thus, given two such sequences $\sigma$ and $\tau$, one can write $\sigma=\sigma'\varrho$ and $\tau=\varrho^{-1}\tau'$ where the subsequence $\varrho$ is as long as possible; in this case, $\sigma\tau=\sigma'\tau'$. (E.g.\ if $\sigma=[4,2,1]$ and $\tau=[1,2,1,3]$, then $\varrho=[2,1]$ and  $\sigma\tau=[4,1,3]$.) In the following we  use the notion of computable sets, implicitly assuming an encoding of group elements by natural numbers according to Remark~\ref{Malcev}. The subgroup $U = \la x_1 \ldots x_r \ra$ of $A$  is   computable, because given a normal form for $A$ one can check whether it  represents a power of $x_1 \ldots x_r$. As   a right transversal $\mathcal{S}$ of $U$ in $A$, we now pick those elements $\sss$ that are the length-lexicographically least in their right cosets of $\la x_1 \ldots x_r\ra  $. The latter is decidable because $\mathcal{S}$ consists of  all the sequences $\sss$ that do not start with $[1, 2,\ldots, s]$ or with $[r,r-1,\ldots,r-t]$ for some $s> r/2$ or $t< r/2$: to see this, observe that  \[\langle x_1\ldots x_r\rangle x_1\ldots x_s = \langle x_1\ldots x_r\rangle x_r x_{r-1}\ldots x_{s+1},\] and $[r,r-1,\ldots,s+1]$ is lexicographically smaller than $[1,2,\ldots,s]$; analogously for the representative $x_rx_{r-1}\ldots x_{r-t}$.  Since $r$ is fixed, $\mathcal{S}$ is computable. A similar argument for $\mathcal{T}$ (with variables $y_i = x_{n-i+1}$ for $1\le i \le n-r$) shows that $\mathcal{T}$ is computable. Thus $L_n$ has solvable word problem, and  so does  $H_n$.
\end{proof}

\section{the structure of $H_4$}\label{sec_nut}
\noindent We use the common notation ${\rm pc}\la X\mid R\ra$ for a polycyclic group presentation $\la X\mid R\rangle$ where all trivial commutator relations $g_i^{g_j}=g_i$ are omitted (where $g_i,g_j\in X$), c.f.\ \cite[Section 8.1]{handbook}.

\begin{proposition}\label{prop_pc}The group $H_4$ is isomorphic to the (torsion-free) polycyclic group
\[ 
{\rm pc}\la r,a,b,z  \mid r^2=z,\;\; a^r = a^{-1},\;\; b^r = b^{-1},\;\; b^a =bz^2  \ra.
\]
\end{proposition}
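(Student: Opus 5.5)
The plan is to write down explicit homomorphisms in both directions between $H_4$, in the presentation \eqref{eq_Hn}, and the polycyclic group $G={\rm pc}\la r,a,b,z\mid\ldots\ra$ of the statement, and to check that they are mutually inverse on generators. First I record the structure of $G$. The omitted commutator relations say that $z$ is central and that $a,b$ commute with $z$, so $N=\la a,b,z\ra$ is a Heisenberg-type group with $[b,a]=z^2$, and $G=N\la r\ra$ with $r^2=z\in N$. Every element of $G$ therefore has a normal form $r^\epsilon a^i b^j z^k$ with $\epsilon\in\{0,1\}$. The map $r\mapsto -1$, $N\to 0$ is not quite a homomorphism to $\ZZ/2$ because $r^2=z$, so instead I will use the normal form directly.

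The forward map $\varphi\colon H_4\to G$ is found by an ansatz. Put $x_1\mapsto r$ and $x_i\mapsto rn_i$ with $n_i\in N$. The relation $x_i^2=x_1^2$ becomes $(rn_i)^2=z\,n_i^r n_i=z$, that is, $n_i^rn_i=1$. This holds for $n_i=a$ and $n_i=b$, since $r$ inverts both. So I try $x_2\mapsto ra$, $x_3\mapsto rb$, and solve $x_1x_2x_3x_4=z$ for $x_4\mapsto rc$. Pushing each $r$ to the left with $a^r=a^{-1}$ and $b^r=b^{-1}$ gives $x_1x_2x_3x_4\mapsto z^2ab^{-1}c$. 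This forces $c=ba^{-1}z^{-1}$.

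I then have to check $c^rc=1$, and this is where I expect the main obstacle. A quick computation gives $c^rc=z^{-4}$, not $1$, so the naive choice fails and the generators need to be adjusted. My first attempt will be to multiply the $n_i$ by central powers of $z$: replacing $n_i$ by $n_iz^{m_i}$ multiplies $n_i^rn_i$ by $z^{2m_i}$ and also shifts the product relation. If that gives an inconsistent linear system in the $m_i$, I will instead try permuting the roles, for example $x_2\mapsto rb$ and $x_3\mapsto ra$, or using $ab$ in place of $b$; this changes the sign of the commutator contribution $z^{\pm 2}$. Once a consistent choice is found, $\varphi$ is a homomorphism by von Dyck's theorem.

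For the inverse $\psi\colon G\to H_4$, I will send $r\mapsto x_1$, $a\mapsto x_1^{-1}x_2$, $b\mapsto x_1^{-1}x_3$ (with the same $z$-corrections as chosen above), and $z\mapsto x_1^2=w_4$. I then verify the pc relations in $H_4$ using only that $w_4$ is central and that $x_i^{-1}=x_iw_4^{-1}$. For instance, $(x_1^{-1}x_2)^{x_1}=x_2x_1^{-1}=(x_1^{-1}x_2)^{-1}$ after conjugation. The relation $b^a=bz^2$ should follow from $x_1x_2x_3x_4=w_4$, rewriting $x_4$ in terms of the others. Both composites fix generators, so $H_4\cong G$. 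Finally, torsion-freeness: $N$ is torsion-free, being a lattice-type nilpotent group, and for an element $g=rn$ with $n=a^ib^jz^k$ one gets $g^2=z\,n^rn=z^{1+2k}\cdot(\text{commutator term } z^{\pm 2ij})$, which is a nonzero odd power of $z$. Hence every nontrivial element outside $N$ has infinite order as well.
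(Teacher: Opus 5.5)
Your strategy is the paper's: explicit homomorphisms in both directions via von Dyck's theorem, checked to be mutually inverse on generators. The paper's map even has your shape $x_1\mapsto r$, $x_i\mapsto rn_i$ with $n_i^rn_i=1$: since $(a^{-1}r)^2=z$, its $\varphi'$ simplifies to $x_1\mapsto r$, $x_2\mapsto rb$, $x_3\mapsto rabz$, $x_4\mapsto ra$.

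\textbf{The gap.} Your proposal never commits to a choice that works, and the first remedy you propose fails as you set it up. From $b^ja^{i'}=a^{i'}b^jz^{2ji'}$ one gets $n^rn=z^{2k-2ij}$ for $n=a^ib^jz^k$, so $n^rn=1$ forces $k=ij$. With $x_1\mapsto r$ held fixed, $n_2=a$ and $n_3=b$ therefore admit no $z$-correction. Then $n_4$ is forced by $z^2n_2n_3^rn_4=z$, so the defect $c^rc=z^{-4}$ cannot be removed. (The $z$-correction system becomes solvable only if you also move $x_1$. It then forces $x_1\mapsto rz^{-1}=r^{-1}$, $x_2\mapsto r^{-1}a$, $x_3\mapsto r^{-1}b$, $x_4\mapsto rba^{-1}$; all squares and the product of these images equal $z^{-1}$.)

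\textbf{What is needed, and the fix.} Keep $x_1\mapsto r$ and write $n_i=a^{p_i}b^{q_i}z^{p_iq_i}$. Then the relation $z^2n_2n_3^rn_4=z$ is equivalent to $p_4=p_3-p_2$, $q_4=q_3-q_2$ and $p_2q_3-p_3q_2=-1$. Your choice $(n_2,n_3)=(a,b)$ gives $+1$, and so does replacing $b$ by $ab$. Your swap $(n_2,n_3)=(b,a)$ gives $-1$. So take $\varphi\colon x_1\mapsto r$, $x_2\mapsto rb$, $x_3\mapsto ra$, $x_4\mapsto rab^{-1}z^{-1}$, with inverse $\psi\colon r\mapsto x_1$, $a\mapsto x_1^{-1}x_3$, $b\mapsto x_1^{-1}x_2$, $z\mapsto w_4$.
\begin{itemize}
\item Under $\psi$, the relation $b^a=bz^2$ becomes $x_2x_1x_3=w_4^2x_3x_1x_2$.
\item This follows from $x_3x_2x_1=w_4^2x_1x_2x_3$, which is a consequence of $x_4=x_1x_2x_3$ and $x_i^{-1}=x_iw_4^{-1}$. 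Multiply by $x_3^{-1}=x_3w_4^{-1}$ on the left and by $x_3$ on the right.
\item The same identity gives $\psi\varphi(x_4)=x_3x_2^{-1}x_1^{-1}=x_3x_2x_1w_4^{-2}=x_4$.
\item The remaining checks are immediate.
\end{itemize}
With this filled in, your argument is complete.

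\textbf{Two minor points.}
\begin{itemize}
\item Your torsion-freeness computation $g^2=z^{1+2k-2ij}$ is correct, but it presupposes that the pc presentation is consistent, i.e.\ that $z$ has infinite order and $N$ is torsion-free. You can justify this with the model $\mathbb{Z}^3$ with product $(i,j,k)(i',j',k')=(i+i',j+j',k+k'+2ji')$, extended by the involution $(i,j,k)\mapsto(-i,-j,k)$ with $r^2=(0,0,1)$.
\item Your aside that $r\mapsto -1$, $N\mapsto 0$ does not define a homomorphism to $\mathbb{Z}/2$ is mistaken: $r^2=z\mapsto 0$ is consistent. You never use it, so nothing depends on it.
\end{itemize}
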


\begin{proof} 
Let $H$ be the group given by the polycyclic presentation in the proposition, with generators $r,a,b,z$.    Define a map $\psi\colon \{r,a,b,z\}\to \{\tilde r,\tilde a, \tilde b, \tilde z\}\subseteq H_4$, where
  \[
  \tilde r=x_1,\quad \tilde a= x_1x_4^{-1}, \quad\tilde b=x_1x_2x_4^{-2}, \quad \tilde z=x_1x_2x_3x_4.
  \]
  One can show that $\{\tilde r,\tilde a,\tilde b,\tilde z\}$ generates $H_4$. Recall that each   $x_i^2=x_1x_2x_3x_4$, so $\tilde z$ is central and $\tilde r^2=\tilde z$. We show that the relations of $H$ hold in $H_4$ via $\psi$; then von Dyck's Theorem \cite[Theorem 2.53]{handbook} proves that $\psi$ extends to a unique group epimorphism $H\to H_4$. For this it remains to consider the images of the relations involving $a^{r}$, $b^r$, and $b^a$. Note that $\tilde a^{\tilde r}=x_4^{-1}x_1=x_4x_1^{-1}=\tilde a^{-1}$ where the middle equation holds since   $x_4^{-2}x_1^{2}=1$. Similarly, we have $\tilde b^{\tilde r} = x_2x_4^{-2}x_1 = x_2^{-1} x_1^{-1}x_4^2 = \tilde b^{-1}$, where the middle equation holds since $x_1x_2^2x_4^{-2}x_1=x_1^2=x_4^2$. Lastly, since $x_1x_2x_3=x_4$, we have
  \[\tilde b^{\tilde a}=x_4 x_2 x_4^{-2}x_1x_4^{-1}=x_4^{-1}x_2x_1x_4^{-1} \qquad \textrm{and} \qquad \tilde b\tilde z^2=x_1x_2x_1x_2x_3x_4=x_1x_2x_3^2=x_4x_3;\]
 now $\tilde b^{\tilde a}=\tilde b\tilde z^2$ follows from $x_1=x_2x_3x_4$ and $x_3=x_2^{-1}x_1x_4^{-1}=x_2^{-2}x_2x_1x_4^{-1}=x_4^{-2}x_2x_1x_4^{-1}$. Now von Dyck implies that $\psi$ induces an epimorphism $H\to H_4$. To prove isomorphism, consider the map $\varphi\colon \{\tilde r,\tilde a,\tilde b,\tilde z\}\to \{r,a,b,z\}$. To apply von Dyck's Theorem to the generators $x_1,\ldots,x_4$, note that
  \[x_1=\tilde r,\quad x_2=\tilde r^{-1} \tilde b (\tilde a^{-1}\tilde r)^2 ,\quad  x_3= (\tilde a^{-1}\tilde r)^{-2}\tilde b^{-1}\tilde z \tilde r^{-1}\tilde a,\quad x_4=\tilde a^{-1}\tilde r,\]so $\varphi$ translates to the map \[\varphi'\colon \{x_1,x_2,x_3,x_4\}\to \{r, r^{-1} b ( a^{-1} r)^2, (a^{-1} r)^{-2}b^{-1} z r^{-1} a, a^{-1}r \}.\] A straightforward calculation in the polycyclic group $H$ shows that the image of $\varphi'$ satisfies the relations of $H_4$. Since $\varphi$ and $\psi$ are mutually inverse, $H\cong H_4$.  
\end{proof}

 \section{Normal forms for  $\mathbb{F}_2[H_4]$}\label{sec_nut2}
\label{sec_norm} 
\noindent Using SAT solvers, we  showed that there is  no  non-trivial unit so that both the unit and its inverse or supported on a ball of radius 4 in the Cayley graph given by the generators $a,b,r$. For larger radius our query did not return  an answer. Here we develop    normal forms for the group ring $\mathbb{F}_2[H_4]$, which could also be used to search for nontrivial units.

Let $K$ be a field. We   consider normal forms for $K[H_4]$, and for this we first define a subgroup $S<H_4$ of index $2$. Recall that $[r,a,b,z]$ is a polycyclic generating set for $H_4$, so $[a,b,z]$ is a polycyclic generating set for the subgroup $S\leq H_4$ generated by $\{a,b,z\}$; in particular, \[S\cong {\rm pc}\langle a,b,z \mid b^a = bz^2\rangle\] has index $2$ in $H_4$ and is torsion-free. The normal forms of $S$ are exactly $a^ub^vz^w$ with $u,v,w\in\Z$. Using the presentation of $S$, a simple calculation shows that
\begin{equation} \label{eqn:rst}(a^ub^vz^w)(a^{u'}b^{v'}z^{w'})= a^{u+u'}b^{v+v'}z^{w+w'+2vu'}.
\end{equation}
Note that $r^2=z$ is central, so conjugation with $r$ is an automorphism of order $2$ of  $S$. Specifically, we have
\begin{equation} \label{eqn:drst}(a^u b^v z^w)^r = a^{-u} b^{-v} z^{w}.
\end{equation}
Since $H_4$ is the disjoint union of $S$ and $rS$, we have the following.

\begin{lemma} Every element $u \in  K[H_4]$ can be written uniquely as $u= \alpha + r\beta$ with $\alpha,\beta \in K[S]$.
\end{lemma}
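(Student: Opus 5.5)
The plan is to show that $H_4$ is the disjoint union of the two cosets $S$ and $rS$, and then split the support of $u$ along these cosets.

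First I would check that $S$ has index $2$ in $H_4$. Since $[r,a,b,z]$ is a polycyclic generating sequence, every element of $H_4$ has a unique normal form $r^{\epsilon}a^{u}b^{v}z^{w}$. Here $\epsilon$ can be taken in $\{0,1\}$, because the relative order of $r$ modulo $\langle a,b,z\rangle$ is $2$: the relation $r^2=z$ holds and $z\in S$. Also $S=\langle a,b,z\rangle$ is normal in $H_4$, since conjugation by $r$ preserves it by \eqref{eqn:drst}. Hence $H_4=S\,\dot\cup\, rS$, and every $g\in H_4$ lies in exactly one of $S$ and $rS$. If $g\in rS$, then $r^{-1}g\in S$ is uniquely determined.

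Next, for existence, write $u=\sum_{g\in H_4}u_g g$ and split the finite support into $A=\mathrm{supp}(u)\cap S$ and $B=\mathrm{supp}(u)\cap rS$. Set $\alpha=\sum_{g\in A}u_g g$ and $\beta=\sum_{g\in B}u_g\, r^{-1}g$. Then $\alpha,\beta\in K[S]$, and $u=\alpha+r\beta$ by linearity of left multiplication by $r$ in $K[H_4]$.

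For uniqueness, suppose $\alpha+r\beta=\alpha'+r\beta'$. Then $\alpha-\alpha'=r(\beta'-\beta)$. The left side is supported in $S$, while the right side is supported in $rS$, because left multiplication by $r$ maps $S$ bijectively onto $rS$ and preserves coefficients. Since $S\cap rS=\emptyset$, both sides must be $0$, so $\alpha=\alpha'$; as $r$ is a unit, also $\beta=\beta'$. Equivalently, $K[H_4]=K[S]\oplus rK[S]$ as left $K[S]$-modules, since the group elements form a $K$-basis.

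The only step with real content is verifying that $r\notin S$, that is, that the index is exactly $2$ and not $1$. This follows from the uniqueness of normal forms for the consistent polycyclic presentation in Proposition~\ref{prop_pc}. Consistency holds because the presentation defines a torsion-free polycyclic group isomorphic to $H_4$. Alternatively, the map $H_4\to\mathbb{Z}/2$ with $r\mapsto 1$ and $a,b,z\mapsto 0$ respects all relations, including $r^2=z$, since $2\equiv 0$. This map kills $S$ but not $r$, which gives $r\notin S$ directly.
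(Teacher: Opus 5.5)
Your proof is correct and follows the paper's route: the paper takes $S$ to have index $2$ (read off from the polycyclic generating sequence $[r,a,b,z]$), so $H_4$ is the disjoint union of $S$ and $rS$, and the decomposition $u=\alpha+r\beta$ comes from splitting the support of $u$ along these two cosets. Your homomorphism $H_4\to\mathbb{Z}/2$ with $r\mapsto 1$ is a sound, self-contained proof that $r\notin S$, and it is the justification to rely on, since torsion-freeness alone does not establish consistency of the presentation.
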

For $\beta\in K[S]$ write $\beta^r=r^{-1}\beta r$; this can be evaluated via \eqref{eqn:drst}. If $\alpha+r\beta$ and $\alpha'+r\beta'$ lie in $K[H_4]$, 
\begin{align}\label{eq_nfs}(\alpha+r\beta)(\alpha'+r\beta')&=\alpha\alpha' + z \beta^r\beta' +r(\beta\alpha' + \alpha^r\beta');
\end{align}
recall that $r^2=z$, so $z\beta^r\beta'=r^2r^{-1}\beta r \beta'=r\beta r \beta'$.

Now consider $K=\mathbb{F}_2$, the binary field. The normal forms exhibited above can be used for the  search for non-trivial units: For $N\in\mathbb N$ denote by $B_N$ the set of all elements  $a^ub^vz^w\in S$ with $|u|+ |v| + |w| \le N$; by abuse of notation, we call this a ball of radius $N$ for $S$.  Now  think of making  a guess for  $\alpha,\beta,\alpha',\beta'\in \mathbb{F}_2[S]$ by picking four finite subsets of $B_N$; that is, assign truth values to Boolean variables,  one per ball element. (It may be useful to only consider subsets of small sizes; in Gardam's work \cite{G}, the support sizes are $21$ out of a ball of $144$ elements.) By \eqref{eq_nfs}, the elements $u=\alpha+r\beta$ and $v=\alpha'+r\beta'$ are units if and only if 
\[\alpha\alpha' + z \beta^r\beta'=1\quad\text{and}\quad \beta\alpha' + \alpha^r\beta'=0.\]
Since $\alpha,\alpha',\beta,\beta'$ are indeterminates, we can represent these equations by a Boolean formula. Since $S$ is left-orderable, $\mathbb{F}_2[S]$ satisfies the unit conjecture. So we can impose that one  of the variables in $\beta$ and $\beta'$ is set to~$1$. A SAT solver might be used    to  see if there are  $\alpha,\alpha',\beta,\beta'$ constituting non-trivial units. 

 \section{The unique product property}

\subsection{Refuting the   UPP: a computational approach}\label{sec_comp}

\noindent Let $G$ be a group. If  $K=\mathbb{F}_2$ is the field with $2$ elements, there is a one-to-one correspondence between subsets $S\subseteq G$ and elements $g_S = \sum_{g\in S} g$ in the group ring $K[G]$. If $G$ does not  satisfy the UPP,  there exist finite non-empty subsets $A,B\subseteq G$ such that every element in the multiset $AB$ occurs with multiplicity at least $2$. Thus, if $C$ is the set of all elements in $AB$, then in the product 
\[g_A g_B = \sum_{c\in C} (\sum_{a\in A, b\in B, \atop ab=c} 1) c\]
each  coefficient $\sum_{a\in A, b\in B, ab=c} 1$ is greater than $1$.

We now describe how to use a SAT solver to potentially find two such sets $A$ and $B$ within a prescribed finite super-set $S\subseteq G$. Let $S=\{s_1,\ldots,s_n\}$ and identify $A$ and $B$ with $a=g_A=\sum_{s\in S} a_s s$ and $b=g_B=\sum_{s\in S}b_s s$, respectively, where each $a_s$ and $b_s$ is an indeterminate in $\mathbb{F}_2$, considered as a Boolean variable (with $1={\tt true}$ and $0={\tt false}$). We want to find assignments $a_s,b_s\in\{0,1\}$ for each $s\in S$ such that
\begin{align}\label{eq_SAT_1}\Big(\bigvee_{s\in S} a_s\Big)\wedge \Big(\bigvee_{s\in S} b_s\Big)\wedge\bigwedge_{u,v\in S} \Big( (a_u\wedge b_v) \rightarrow \bigvee_{u',v'\in S\atop {u'\ne v \atop uv=u'v'}} a_{u'}\wedge b_{v'}\Big)
\end{align} 
is true. If we find such an assignment, then the first two parts of that formula say that the  corresponding elements $a$ and $b$ are non-zero in $\mathbb{F}_2[G]$, whereas the last part says that whenever $a_u=b_v=1$, then the coefficient of $uv$ in the sum describing $ab$ has at least two non-zero terms $a_ub_v$ and $a_{u'}b_{v'}$, that is, $uv=u'v'$ with $u\ne u'$. To bring \eqref{eq_SAT_1}  into conjunctive normal form, we introduce auxiliary Boolean variables $c_{u,v}$ defined by the property $c_{u,v} \leftrightarrow (a_u\wedge b_v)$; this turns \eqref{eq_SAT_1} into the following formula with indeterminates $a_u, b_v, c_{u,v}$ for $u,v\in S$:
\begin{equation}\label{eq_SAT}
	\begin{array}{ll}&\hspace*{1.8ex}\bigwedge_{u,v\in S} \Big((\neg c_{u,v}\vee a_u) \wedge (\neg c_{u,v} \vee a_v)\wedge (\neg a_u\vee \neg  a_v\vee c_{u,v})\Big)\\[2ex]\wedge&\Big(\bigvee_{s\in S} a_s\Big)\wedge \Big(\bigvee_{s\in S} b_s\Big)\wedge \bigwedge_{u,v\in S} \Big( (\neg c_{u,v}) \vee  \bigvee_{u',v'\in S\atop {u'\ne v\atop uv=u'v'}} c_{u',v'}\Big).
	\end{array}
\end{equation}

For a given group $G$ and given set $S$, we compute the multiplication table with rows and columns labelled by the elements of $S$, and use this table to write down the clauses of the formula \eqref{eq_SAT}. If a SAT solver determines that this formula can be satisfied, then the corresponding solution determines two sets $A$ and $B$ that demonstrate that $G$ does not satisfy the UPP.

\subsection{The UPP fails for $H_4$}

\begin{proposition}\label{prop_H4upp} The group $H_4$ does not satisfy the unique product property.
\end{proposition}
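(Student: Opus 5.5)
The plan is to give an explicit witness: finite non-empty sets $A,B\subseteq H_4$ such that every element of the multiset $AB$ occurs with multiplicity at least $2$. The witness will come from the SAT approach of Section~\ref{sec_comp}, and the correctness check will be done by hand (or in GAP) using the normal forms of Section~\ref{sec_norm}.

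\textbf{Step 1: choose the search region.} By Proposition~\ref{prop_pc}, every element of $H_4$ has a unique normal form $r^\epsilon a^u b^v z^w$ with $\epsilon\in\{0,1\}$ and $u,v,w\in\ZZ$. Products can be computed exactly from \eqref{eqn:rst}, \eqref{eqn:drst} and $r^2=z$. I would take the superset to be $S_N=B_N\cup rB_N$ for small $N$, for example $N=2$ or $3$. This gives the full multiplication table on $S_N$ exactly, so all coincidences $uv=u'v'$ are detected correctly.

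\textbf{Step 2: build and solve the formula.} From this table I write the clauses of \eqref{eq_SAT} and run a SAT solver, increasing $N$ until the formula becomes satisfiable. As a heuristic, the search can be steered towards sets that meet both cosets $S$ and $rS$. The reason is that $S$ is left-orderable, so by Proposition~\ref{prop:chain} it satisfies the UPP. Hence no pair $A,B$ contained in a single coset of $S$ (up to translation) can work, and the witness must use $r$ essentially.

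\textbf{Step 3: verify the witness.} A satisfying assignment gives explicit sets $A$ and $B$, presumably of moderate size (Nielsen--Soelberg forces $|A|\ge 8$ when $A=B$). The proof then lists $A$ and $B$ as normal-form words and verifies the claim by computing the multiset $AB$ with \eqref{eqn:rst}, \eqref{eqn:drst}, checking that each element has multiplicity at least $2$. This check is finite and certified, either as a displayed multiplication table or as a short GAP computation in the pc presentation. If convenient, I would present the sets in the generators $x_i$ via the isomorphism $\psi$.

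\textbf{Main obstacle.} The hard part is Step 2: the search may need a large region, and the solver may not terminate, as happened for the unit search. If that occurs, I would first try imposing $A=B$ or $B=A^{-1}$, which roughly halves the variables, in the spirit of Promislow's set for $P$. I would also exploit the automorphism conjugation by $r$ to restrict to symmetric candidates, $A^r=A$.
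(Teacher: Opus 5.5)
Your approach is the paper's: its proof likewise runs \textcode{Kissat} on formula \eqref{eq_SAT} over a finite region, namely the radius-$3$ Cayley ball with respect to $x_1,\ldots,x_4$ rather than your normal-form boxes $B_N\cup rB_N$, and then certifies the resulting explicit sets ($|A|=29$, $|B|=27$) by a GAP count of the multiset $AB$; as you acknowledge, the argument is only a proof once such a witness has actually been produced. To calibrate your search, note that in your coordinates $x_1=r$, $x_2=rb$, $x_3=rabz$, $x_4=ra$, and the paper's witness (which does meet both cosets of $S$, as you predicted) contains elements such as $x_2x_1x_3^{-1}=rab^2z^3$, so as written it only fits into $B_N\cup rB_N$ for $N\geq 6$.
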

\begin{proof}
  Let $H=H_4$ and consider the ball of radius $3$ in the Cayley graph of $H$ with respect to the generators $\{x_1,\ldots,x_4\}$. We used the SAT solver \textcode{Kissat} to show that the formula \eqref{eq_SAT} is satisfiable; our solutions translates to the sets $A$ and $B$ shown in Figure \ref{fig_G4}, which also provides GAP code that verifies the claim.
\end{proof}

\begin{figure}[h] 
\begin{verbatim} 
  F  := FreeGroup(["x1","x2","x3","x4"]);; 
  AssignGeneratorVariables(F);;
  R  := [x2*x3*x4/x1, x3*x4*x1/x2, x4*x1*x2/x3, x1*x2*x3/x4];;
  H4 := F/R;;
  AssignGeneratorVariables(H4);;
  A := [ x1^0, x1, x4, x1^-1, x3^-1, x1^2, x1*x3, x1*x2^-1, x1*x3^-1,
         x1*x4^-1, x2*x1, x2*x4^-1, x3*x1^-1, x4*x3, x4*x1^-1,  x1^3,
         x4*x2^-1, x1^-2, x1^-1*x2^-1, x3^-1*x4^-1, x3*x4^-1*x1^-1,
         x1*x3*x2^-1, x1*x4*x1^-1, x1*x4^-1*x2^-1, x2*x1*x2, x1^2*x2,
         x2*x4^-1*x2^-1, x3*x1*x2^-1,  x3^-1*x4^-1*x2^-1 ];;
  B := [ x1^0, x1, x3, x1^-1, x3^-1, x4^-1, x1*x3^-1, x1*x4^-1, 
         x2*x4, x2*x1^-1, x2*x3^-1, x3*x2^-1, x3*x4^-1, x4*x3, 
         x1^-1*x2^-1, x2^-1*x3^-1, x2^-1*x4^-1, x1^2*x3, x1^2*x4,
         x2*x1*x2, x2*x1*x3^-1, x2*x4*x1^-1, x2*x1^-1*x2^-1,
         x2*x3^-1*x4^-1, x2*x4^-1*x2^-1, x2*x1, x4*x2^-1 ];;
  ForAll(Collected(Flat(List(A,a->List(B,b->a*b)))),u->u[2]>1);
  # true
\end{verbatim}
\caption{GAP code to verify that $H_4$ does not satisfy the UPP.}\label{fig_G4}
\end{figure}

 We note that the failure of the UPP in Proposition \ref{prop_H4upp} is not a property  inherited from the group $P$ in \eqref{eq_P}. Recall that the Hirsch length of a polycyclic group is the number of infinite factors in a polycyclic series for the group; this number is independent of the chosen polycyclic series.

\begin{lemma} 
The group $P$ from \eqref{eq_P} is not isomorphic to a quotient of a subgroup of $H_4$. 
\end{lemma}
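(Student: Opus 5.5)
The plan is to compare Hirsch lengths and then use the fact that $P$ is virtually abelian while $H_4$ is not. Suppose, for a contradiction, that $U\leq H_4$ and $N\trianglelefteq U$ satisfy $U/N\cong P$.

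First I would compute the Hirsch lengths. By Proposition~\ref{prop_pc}, $H_4$ has the polycyclic series with generators $r,a,b,z$. Here $r^2=z$, so the factor corresponding to $r$ is finite and the factors for $a,b,z$ are infinite cyclic; hence $h(H_4)=3$. For $P$, the translation subgroup $\langle x,y,z\rangle\cong\ZZ^3$ has index $4$, so $h(P)=3$.

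Next I would use the standard facts that Hirsch length is monotone under subgroups and additive on extensions: $h(U)\leq h(H_4)=3$ and $h(U)=h(N)+h(U/N)$. Since $h(U/N)=h(P)=3$, this forces $h(N)=0$ and $h(U)=3$. Then:
\begin{itemize}
\item $h(N)=0$ means $N$ is a finite polycyclic group, so $N$ is finite; as $H_4$ is torsion-free, $N=1$ and $U\cong P$;
\item $h(U)=h(H_4)$ means $U$ has finite index in $H_4$.
\end{itemize}

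Since $P$ is virtually $\ZZ^3$, $U$ contains an abelian subgroup $V$ of finite index, and then $V$ has finite index in $H_4$. Consequently $V\cap S$ has finite index in $S=\langle a,b,z\rangle$. Because $S/(V\cap S)$ is finite, some $k\geq1$ satisfies $a^k,b^k\in V$. By~\eqref{eqn:rst}, $b^ka^k=a^kb^kz^{2k^2}$, so $[a^k,b^k]$ is a nontrivial power of $z$, which is nontrivial because $z$ has infinite order. This contradicts $V$ being abelian.

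I expect the main obstacle to be justifying the Hirsch-length bookkeeping cleanly: that $h(H_4)=3$, which needs $r$ to be the unique finite factor, and that equality of Hirsch lengths gives finite index. These are standard facts about polycyclic groups and should only need to be cited. The commutator computation itself is routine from~\eqref{eqn:rst}.
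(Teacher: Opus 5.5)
Your proof is correct, and the first half matches the paper: the Hirsch-length bookkeeping forces $N=1$ and $[H_4:U]<\infty$. The final step differs.

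The paper first passes to the subgroup $\mathbb{Z}^3\leq P$ by taking its preimage in $U$. After the Hirsch-length step it therefore has to show that $H_4$ contains no subgroup isomorphic to $\mathbb{Z}^3$. It does this by a case analysis on the normal forms $r^wa^sb^tz^u$ of three commuting generators:
\begin{enumerate}
\item It rules out generators involving $r$, since conjugation by such an element inverts the $a$- and $b$-exponents.
\item It then uses conjugation inside $S$ to push two of the generators into $\langle z\rangle$.
\end{enumerate}

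You instead keep $U\cong P$. You use that $P$ is virtually abelian to get an abelian $V$ of finite index in $H_4$, and then dispose of $V$ with a single commutator. Pigeonhole on the cosets $a^i(V\cap S)$ and $b^i(V\cap S)$ gives $a^k,b^k\in V$ for some $k\geq 1$; no normality is needed. Then \eqref{eqn:rst} gives $b^ka^k=a^kb^kz^{2k^2}\neq a^kb^k$.

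Your route is shorter and more conceptual: it amounts to the observation that the Heisenberg-type subgroup $S$ is not virtually abelian. It does, however, use finite index essentially in its last step. The paper's elementwise argument does not need finite index there. It shows directly that $H_4$ has no $\mathbb{Z}^3$ subgroup at all, which, given $h(H_4)=3$, is the same obstruction.
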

\begin{proof}It is known that $P$ contains a subgroup isomorphic to $\Z^3$, see \cite[p.\ 447]{C}. Thus, if $P$ is isomorphic to a quotient of a subgroup of $H_4$, then there exist  $U\leq H_4$ and $V\unlhd U$ with $\Z^3\cong U/V$. The group  $H_4$ has Hirsch length $3$, and it follows from \cite[p.\ 16]{segal} that the Hirsch length of $U$ is at most $3$, with equality if and only if $U$ has finite index in $H_4$. Since $U/V\cong \Z^3$, this forces $[H_4:U]<\infty$ and $U$ has Hirsch length~$3$. Since $V$ is normal in $U$, it also follows from  \cite[p.\ 16]{segal} that the Hirsch length of $U$ is the sum of the Hirsch lengths of $U/V$ and $V$. This forces that $V$ has Hirsch length $0$, so $V$ is finite. Since $H_4$ is torsion-free, $V=1$, and therefore $U\cong \Z^3$ is a finite index subgroup of $H_4$. We show that this is not possible. 

  For a contradiction, suppose that $x=r^wa^sb^tz^u$ and $x'=r^{w'}a^{s'}b^{t'}z^{u'}$ and $x''=r^{w''}a^{s''}b^{t''}z^{u''}$ are generators of such a subgroup $U\cong \Z^3$ in normal form. Recall that $w,w',w''\in\{0,1\}$. We first show that $\langle x,x',x''\rangle\cong \Z^3$ forces $w=w'=w''=0$. Suppose not all $w,w',w''$ are $0$, say $w=1$. We can now arrange that $w'=w''=0$ by replacing $x'$ and $x''$ by a product with  $x^{-1}$, if necessary. Now the relations of $H_4$ show that the exponents of $a$ and $b$ in $(x')^x$ are ${-s'}$ and ${-t'}$, respectively, but since $x$ and $x'$ commute, this forces $s'=t'=0$. This implies that if $w=1$, then $x'\in \langle z\rangle$, and the same argument shows that $x''\in\langle z\rangle$. But then $\langle x,x',x''\rangle \cong \Z^3$ is not possible, a contradiction. Thus, we can assume that $w=w'=w''=0$, so $x,x',x''\in S=\langle a,b,z\rangle$.

  Now we repeat with a similar argument. By assumption, $\Z^3\cong \langle x,x',x''\rangle\leq S$. Since $\langle b,z\rangle \cong \Z^2$, we can assume that $x\notin \langle b,z\rangle$, that is, $s\ne 0$. As before, we can  arrange that $s'=s''=0$ by replacing $x'$ and $x''$ by suitable powers and a product with a power of $x$. That is, we assume $\langle x,x',x''\rangle\cong \Z^3$ with $x',x''\in\langle b,z\rangle$ and $x\in S\setminus\langle b,z\rangle$.  Since $\langle b,z\rangle$ is abelian, we now observe $x'=(x')^x=(x')^{a^sb^tz^u}=(b^{t'}z^{u'})^{a^s}=b^{t'}z^{2st'+u'}$, and $s\ne 0$ forces $t'=0$. But then $x'\in\langle z\rangle$, and a similar argument shows that $x''\in\langle z\rangle$. But then  $\langle x,x',x''\rangle\cong \Z^3$ is not possible, a final contradiction. 
\end{proof}

   \setlength{\fboxsep}{1pt}
 \setlength{\fboxrule}{0.8pt}
 \begin{table}[ht]
 	\caption{Two swap units in a ball of radius 6 with support size 81 (see \ref{ss:rad5and6})} 
 	\label{table:radius 6}
 	{\fontsize{7}{8.5}\selectfont
 		\begin{center}
 			\setlength{\tabcolsep}{3pt}
 	
 			\begin{tabular}{|c|c|c|c|c|c|c|c|}
 			\hline
 			\fbox{$S_1$} & $1$ & $a$ & $b$ & $a^{-1}$ & $b^{-1}$ & $a^{3}$ & $a^{2}b$ \\
 			\hline
 			$a^{2}b^{-1}$ & $ab^{2}$ & $aba^{-1}$ & $ab^{-2}$ & $ba^{2}$ & $bab^{-1}$ & $b^{3}$ & $b^{2}a^{-1}$ \\
 			\hline
 			$a^{-1}b^{2}$ & $a^{-3}$ & $a^{-2}b^{-1}$ & $a^{-1}b^{-1}a$ & $b^{-1}a^{-1}b$ & $b^{-3}$ & $a^{4}$ & $a^{2}ba^{-1}$ \\
 			\hline
 			$a^{2}b^{-1}a^{-1}$ & $aba^{-1}b^{-1}$ & $ab^{-1}a^{-1}b$ & $ba^{3}$ & $bab^{-1}a^{-1}$ & $bab^{-2}$ & $b^{4}$ & $b^{3}a^{-1}$ \\
 			\hline
 			$ba^{-1}b^{-1}a$ & $a^{-1}bab^{-1}$ & $a^{-4}$ & $a^{-2}b^{-1}a$ & $a^{-1}b^{-1}ab$ & $b^{-1}ab^{2}$ & $b^{-1}aba^{-1}$ & $b^{-1}a^{-1}ba$ \\
 			\hline
 			$b^{-1}a^{-1}b^{2}$ & $b^{-4}$ & $a^{4}b^{-1}$ & $a^{3}b^{2}$ & $a^{3}ba^{-1}$ & $a^{2}b^{-3}$ & $ababa$ & $ababa^{-1}$ \\
 			\hline
 			$abab^{-1}a^{-1}$ & $ab^{4}$ & $aba^{-1}ba$ & $aba^{-1}b^{-1}a$ & $ab^{-1}aba$ & $ba^{4}$ & $ba^{2}b^{2}$ & $ba^{2}ba^{-1}$ \\
 			\hline
 			$babab$ & $babab^{-1}$ & $baba^{-1}b^{-1}$ & $bab^{-1}ab$ & $bab^{-1}a^{-1}b$ & $bab^{-3}$ & $b^{4}a^{-1}$ & $ba^{-1}bab$ \\
 			\hline
 			$a^{-1}baba^{-1}$ & $a^{-3}b^{-1}a$ & $a^{-1}b^{-1}aba^{-1}$ & $b^{-1}abab^{-1}$ & $b^{-1}a^{-1}bab^{-1}$ & $b^{-1}a^{-1}b^{3}$ & $a^{2}ba^{-1}ba$ & $ba^{3}ba^{-1}$ \\
 			\hline
 			$baba^{-1}ba$ & $baba^{-1}b^{-1}a$ & $bab^{-1}aba$ & $bab^{-1}ab^{2}$ & $bab^{-1}aba^{-1}$ & $ba^{-1}baba^{-1}$ & $ba^{-1}bab^{-1}a^{-1}$ & $b^{-1}abab^{-1}a$ \\
 			\hline
 			$b^{-1}abab^{-1}a^{-1}$ & $b^{-1}abab^{-2}$ \\
 			\hline
 			\noalign{\hrule height .04cm}
 			\fbox{$S_2$} & $1$ & $a$ & $b$ & $a^{-1}$ & $b^{-1}$ & $a^{3}$ & $a^{2}b$ \\
 			\hline
 			$a^{2}b^{-1}$ & $ab^{2}$ & $ab^{-1}a^{-1}$ & $ab^{-2}$ & $ba^{2}$ & $b^{3}$ & $b^{2}a^{-1}$ & $ba^{-1}b^{-1}$ \\
 			\hline
 			$a^{-1}ba$ & $a^{-1}b^{2}$ & $a^{-3}$ & $a^{-2}b^{-1}$ & $b^{-1}ab$ & $b^{-3}$ & $a^{4}$ & $a^{3}b$ \\
 			\hline
 			$a^{3}b^{-1}$ & $ab^{3}$ & $aba^{-1}b^{-1}$ & $ab^{-1}a^{-1}b$ & $ab^{-3}$ & $bab^{-1}a^{-1}$ & $b^{4}$ & $b^{2}a^{-1}b^{-1}$ \\
 			\hline
 			$ba^{-1}b^{-1}a$ & $a^{-1}ba^{2}$ & $a^{-1}bab^{-1}$ & $a^{-1}b^{3}$ & $a^{-4}$ & $a^{-3}b^{-1}$ & $a^{-1}b^{-1}ab$ & $b^{-1}aba^{-1}$ \\
 			\hline
 			$b^{-1}a^{-1}ba$ & $b^{-4}$ & $a^{4}b$ & $a^{3}b^{-1}a^{-1}$ & $a^{3}b^{-2}$ & $a^{2}b^{3}$ & $ababa$ & $ababa^{-1}$ \\
 			\hline
 			$abab^{-1}a$ & $aba^{-1}ba$ & $ab^{-1}aba^{-1}$ & $ab^{-1}a^{-1}ba$ & $ab^{-4}$ & $babab$ & $babab^{-1}$ & $baba^{-1}b$ \\
 			\hline
 			$bab^{-1}ab$ & $b^{3}a^{-1}b^{-1}$ & $ba^{-1}bab^{-1}$ & $ba^{-1}b^{-1}ab$ & $a^{-1}ba^{3}$ & $a^{-1}ba^{2}b$ & $a^{-1}baba^{-1}$ & $a^{-1}bab^{-1}a^{-1}$ \\
 			\hline
 			$a^{-1}b^{4}$ & $a^{-1}b^{2}a^{-1}b^{-1}$ & $a^{-4}b^{-1}$ & $b^{-1}abab^{-1}$ & $b^{-1}ab^{3}$ & $b^{-1}aba^{-1}b^{-1}$ & $a^{3}ba^{-1}b$ & $abab^{-1}ab$ \\
 			\hline
 			$abab^{-1}a^{-1}b$ & $abab^{-3}$ & $aba^{-1}bab$ & $aba^{-1}bab^{-1}$ & $ab^{-1}abab^{-1}$ & $ab^{-1}ab^{3}$ & $ab^{-1}aba^{-1}b^{-1}$ & $a^{-1}ba^{3}b$ \\
 			\hline
 			$a^{-1}baba^{-1}b$ & $a^{-1}baba^{-1}b^{-1}$ \\
 			\hline
 			\end{tabular}
 		\end{center}  
 	}
 \end{table}

 \setlength{\fboxsep}{1pt}
\setlength{\fboxrule}{0.8pt}
\begin{table}[ht]
	\caption{All nontrivial units in a ball of radius 4,  where $U_iV_i=1$ (see \ref{ss:rad4}).} 
	\label{table:radius 4}
	{\fontsize{7}{8.5}\selectfont
		\begin{center}
			\setlength{\tabcolsep}{3pt}
	
			\begin{tabular}{|c|c|c|c|c|c|c|c|c|c|c|}
			\hline
			\fbox{$U_{1}$} & $a^{2}$ & $ab$ & $ab^{-1}$ & $ba$ & $b^{2}$ & $ba^{-1}$ & $a^{-1}b$ & $a^{-2}$ & $a^{-1}b^{-1}$ & $b^{-1}a$ \\
			\hline
			$b^{-1}a^{-1}$ & $b^{-2}$ & $a^{2}b$ & $aba$ & $ab^{-1}a$ & $ab^{-2}$ & $bab$ & $ba^{-1}b$ & $a^{-1}b^{2}$ & $a^{-2}b^{-1}$ & $baba$ \\
			\noalign{\hrule height .03cm}
			\fbox{$V_{1}$} & $a^{2}$ & $ab$ & $ab^{-1}$ & $ba$ & $b^{2}$ & $ba^{-1}$ & $a^{-1}b$ & $a^{-2}$ & $a^{-1}b^{-1}$ & $b^{-1}a$ \\
			\hline
			$b^{-1}a^{-1}$ & $b^{-2}$ & $a^{2}b$ & $aba$ & $ab^{-1}a$ & $ab^{-2}$ & $bab$ & $ba^{-1}b$ & $a^{-1}b^{2}$ & $a^{-2}b^{-1}$ & $abab$ \\
			\noalign{\hrule height .04cm}
			\fbox{$U_{2}$} & $a^{2}$ & $ab$ & $ab^{-1}$ & $ba$ & $b^{2}$ & $ba^{-1}$ & $a^{-1}b$ & $a^{-2}$ & $a^{-1}b^{-1}$ & $b^{-1}a$ \\
			\hline
			$b^{-1}a^{-1}$ & $b^{-2}$ & $a^{2}b^{-1}$ & $aba$ & $ab^{2}$ & $ab^{-1}a$ & $ba^{2}$ & $bab$ & $b^{2}a^{-1}$ & $ba^{-1}b$ & $baba$ \\
			\noalign{\hrule height .03cm}
			\fbox{$V_{2}$ }& $a^{2}$ & $ab$ & $ab^{-1}$ & $ba$ & $b^{2}$ & $ba^{-1}$ & $a^{-1}b$ & $a^{-2}$ & $a^{-1}b^{-1}$ & $b^{-1}a$ \\
			\hline
			$b^{-1}a^{-1}$ & $b^{-2}$ & $a^{2}b^{-1}$ & $aba$ & $ab^{2}$ & $ab^{-1}a$ & $ba^{2}$ & $bab$ & $b^{2}a^{-1}$ & $ba^{-1}b$ & $abab$ \\
			\noalign{\hrule height .04cm}
			\fbox{$U_{3}$} & $b$ & $a^{-1}$ & $b^{-1}$ & $ba^{-1}$ & $a^{-1}b$ & $a^{-1}b^{-1}$ & $b^{-2}$ & $a^{3}$ & $aba$ & $ab^{2}$ \\
			\hline
			$ab^{-1}a$ & $ab^{-2}$ & $ba^{2}$ & $ba^{-1}b$ & $a^{-1}ba$ & $a^{-2}b^{-1}$ & $a^{-1}b^{-1}a$ & $a^{2}b^{2}$ & $baba$ & $ba^{-1}ba$ & $a^{-2}b^{-1}a$ \\
			\noalign{\hrule height .03cm}
			\fbox{$V_{3}$} & $a$ & $b$ & $b^{-1}$ & $ab$ & $ba$ & $b^{-1}a$ & $b^{-2}$ & $aba$ & $ab^{-1}a$ & $ba^{2}$ \\
			\hline
			$bab$ & $b^{2}a^{-1}$ & $a^{-1}ba$ & $a^{-1}b^{2}$ & $a^{-3}$ & $a^{-2}b^{-1}$ & $a^{-1}b^{-1}a$ & $abab$ & $ba^{2}b$ & $a^{-1}bab$ & $a^{-3}b^{-1}$ \\
			\noalign{\hrule height .04cm}
			\fbox{$U_4$} & $b$ & $a^{-1}$ & $b^{-1}$ & $b^{2}$ & $a^{-1}b$ & $a^{-1}b^{-1}$ & $b^{-1}a^{-1}$ & $a^{3}$ & $aba$ & $ab^{2}$ \\
			\hline
			$ab^{-1}a$ & $ab^{-2}$ & $ba^{2}$ & $ba^{-1}b$ & $a^{-1}ba$ & $a^{-2}b^{-1}$ & $a^{-1}b^{-1}a$ & $a^{2}b^{-2}$ & $ba^{3}$ & $baba$ & $ba^{-1}ba$ \\
			\noalign{\hrule height .03cm}
			\fbox{$V_{4}$ }& $a$ & $b$ & $b^{-1}$ & $ab^{-1}$ & $ba$ & $b^{2}$ & $b^{-1}a$ & $aba$ & $ab^{-1}a$ & $ba^{2}$ \\
			\hline
			$bab$ & $b^{2}a^{-1}$ & $a^{-1}ba$ & $a^{-1}b^{2}$ & $a^{-3}$ & $a^{-2}b^{-1}$ & $a^{-1}b^{-1}a$ & $abab$ & $a^{-1}ba^{2}$ & $a^{-1}bab$ & $a^{-1}b^{2}a^{-1}$ \\
			\noalign{\hrule height .04cm}
			\fbox{$U_{5}$} & $b$ & $a^{-1}$ & $b^{-1}$ & $ba^{-1}$ & $a^{-1}b$ & $b^{-1}a^{-1}$ & $b^{-2}$ & $a^{3}$ & $a^{2}b$ & $a^{2}b^{-1}$ \\
			\hline
			$aba$ & $ab^{2}$ & $aba^{-1}$ & $ab^{-1}a$ & $ab^{-1}a^{-1}$ & $ab^{-2}$ & $ba^{-1}b$ & $a^{3}b^{-1}$ & $a^{2}b^{2}$ & $abab$ & $aba^{-1}b$ \\
			\noalign{\hrule height .03cm}
			\fbox{$V_{5}$} & $a$ & $b$ & $b^{-1}$ & $ab$ & $ab^{-1}$ & $ba$ & $b^{-2}$ & $a^{2}b$ & $a^{2}b^{-1}$ & $aba$ \\
			\hline
			$aba^{-1}$ & $ab^{-1}a$ & $ab^{-1}a^{-1}$ & $bab$ & $b^{2}a^{-1}$ & $a^{-1}b^{2}$ & $a^{-3}$ & $a^{2}b^{-1}a^{-1}$ & $ba^{2}b$ & $baba$ & $baba^{-1}$ \\
			\noalign{\hrule height .04cm}
			\fbox{$U_{6}$} & $b$ & $a^{-1}$ & $b^{-1}$ & $ab^{-1}$ & $ba$ & $b^{-1}a$ & $b^{-2}$ & $a^{3}$ & $aba$ & $ab^{2}$ \\
			\hline
			$ab^{-1}a$ & $ab^{-2}$ & $ba^{2}$ & $ba^{-1}b$ & $a^{-1}ba$ & $a^{-2}b^{-1}$ & $a^{-1}b^{-1}a$ & $a^{2}b^{2}$ & $abab$ & $aba^{-1}b$ & $a^{-1}ba^{2}$ \\
			\noalign{\hrule height .03cm}
			\fbox{$V_{6}$} & $a$ & $b$ & $b^{-1}$ & $a^{-1}b$ & $a^{-1}b^{-1}$ & $b^{-1}a^{-1}$ & $b^{-2}$ & $aba$ & $ab^{-1}a$ & $ba^{2}$ \\
			\hline
			$bab$ & $b^{2}a^{-1}$ & $a^{-1}ba$ & $a^{-1}b^{2}$ & $a^{-3}$ & $a^{-2}b^{-1}$ & $a^{-1}b^{-1}a$ & $ba^{3}$ & $ba^{2}b$ & $baba$ & $baba^{-1}$ \\
			\noalign{\hrule height .04cm}
			\fbox{$U_{7}$} & $b$ & $a^{-1}$ & $b^{-1}$ & $ab$ & $ba$ & $b^{2}$ & $b^{-1}a$ & $a^{3}$ & $aba$ & $ab^{2}$ \\
			\hline
			$ab^{-1}a$ & $ab^{-2}$ & $ba^{2}$ & $ba^{-1}b$ & $a^{-1}ba$ & $a^{-2}b^{-1}$ & $a^{-1}b^{-1}a$ & $a^{2}b^{-2}$ & $abab$ & $aba^{-1}b$ & $a^{-3}b^{-1}$ \\
			\noalign{\hrule height .03cm}
			\fbox{$V_{7}$} & $a$ & $b$ & $b^{-1}$ & $b^{2}$ & $ba^{-1}$ & $a^{-1}b$ & $a^{-1}b^{-1}$ & $aba$ & $ab^{-1}a$ & $ba^{2}$ \\
			\hline
			$bab$ & $b^{2}a^{-1}$ & $a^{-1}ba$ & $a^{-1}b^{2}$ & $a^{-3}$ & $a^{-2}b^{-1}$ & $a^{-1}b^{-1}a$ & $baba$ & $baba^{-1}$ & $a^{-1}b^{2}a^{-1}$ & $a^{-2}b^{-1}a$ \\
			\noalign{\hrule height .04cm}
			\fbox{$U_{8}$} & $a$ & $a^{-1}$ & $b^{-1}$ & $a^{2}$ & $a^{-1}b^{-1}$ & $b^{-1}a$ & $b^{-1}a^{-1}$ & $a^{2}b$ & $ab^{2}$ & $ab^{-1}a$ \\
			\hline
			$ba^{2}$ & $bab$ & $b^{3}$ & $ba^{-1}b$ & $a^{-1}b^{2}$ & $b^{-1}ab$ & $b^{-1}a^{-1}b$ & $abab$ & $ab^{3}$ & $ab^{-1}ab$ & $ba^{2}b$ \\
			\noalign{\hrule height .03cm}
			\fbox{$V_{8}$} & $a$ & $b$ & $a^{-1}$ & $a^{2}$ & $ab$ & $ba^{-1}$ & $a^{-1}b$ & $a^{2}b^{-1}$ & $aba$ & $ab^{2}$ \\
			\hline
			$bab$ & $ba^{-1}b$ & $a^{-1}b^{2}$ & $a^{-2}b^{-1}$ & $b^{-1}ab$ & $b^{-1}a^{-1}b$ & $b^{-3}$ & $baba$ & $a^{-1}b^{2}a^{-1}$ & $b^{-1}aba$ & $b^{-1}ab^{2}$ \\
			\noalign{\hrule height .04cm}
			\fbox{$U_{9}$} & $a$ & $a^{-1}$ & $b^{-1}$ & $ba$ & $ba^{-1}$ & $a^{-1}b$ & $a^{-2}$ & $a^{2}b$ & $ab^{-1}a$ & $ab^{-2}$ \\
			\hline
			$ba^{2}$ & $bab$ & $bab^{-1}$ & $b^{3}$ & $b^{2}a^{-1}$ & $ba^{-1}b$ & $ba^{-1}b^{-1}$ & $a^{2}b^{2}$ & $abab$ & $ab^{-1}ab$ & $ab^{-3}$ \\
			\noalign{\hrule height .03cm}
			\fbox{$V_{9}$} & $a$ & $b$ & $a^{-1}$ & $ab^{-1}$ & $a^{-2}$ & $a^{-1}b^{-1}$ & $b^{-1}a^{-1}$ & $a^{2}b^{-1}$ & $aba$ & $ab^{-2}$ \\
			\hline
			$bab$ & $bab^{-1}$ & $b^{2}a^{-1}$ & $ba^{-1}b$ & $ba^{-1}b^{-1}$ & $a^{-2}b^{-1}$ & $b^{-3}$ & $a^{2}b^{-2}$ & $baba$ & $bab^{-2}$ & $b^{-1}aba$ \\
			\noalign{\hrule height .04cm}
			\fbox{$U_{10}$} & $a$ & $a^{-1}$ & $b^{-1}$ & $ab^{-1}$ & $a^{-2}$ & $b^{-1}a$ & $b^{-1}a^{-1}$ & $a^{2}b$ & $ab^{2}$ & $ab^{-1}a$ \\
			\hline
			$ba^{2}$ & $bab$ & $b^{3}$ & $ba^{-1}b$ & $a^{-1}b^{2}$ & $b^{-1}ab$ & $b^{-1}a^{-1}b$ & $a^{2}b^{2}$ & $abab$ & $ab^{-1}ab$ & $a^{-1}b^{3}$ \\
			\noalign{\hrule height .03cm}
			\fbox{$V_{10}$} & $a$ & $b$ & $a^{-1}$ & $ab$ & $ba$ & $a^{-1}b$ & $a^{-2}$ & $a^{2}b^{-1}$ & $aba$ & $ab^{2}$ \\
			\hline
			$bab$ & $ba^{-1}b$ & $a^{-1}b^{2}$ & $a^{-2}b^{-1}$ & $b^{-1}ab$ & $b^{-1}a^{-1}b$ & $b^{-3}$ & $a^{2}b^{-2}$ & $baba$ & $b^{-1}aba$ & $b^{-1}a^{-1}b^{2}$ \\
			\noalign{\hrule height .04cm}
			\fbox{$U_{11}$} & $a$ & $a^{-1}$ & $b^{-1}$ & $ab$ & $ba^{-1}$ & $a^{-1}b$ & $a^{-2}$ & $a^{2}b$ & $ab^{2}$ & $ab^{-1}a$ \\
			\hline
			$ba^{2}$ & $bab$ & $b^{3}$ & $ba^{-1}b$ & $a^{-1}b^{2}$ & $b^{-1}ab$ & $b^{-1}a^{-1}b$ & $a^{2}b^{2}$ & $baba$ & $bab^{-1}a$ & $b^{-1}ab^{2}$ \\
			\noalign{\hrule height .03cm}
			\fbox{$V_{11}$} & $a$ & $b$ & $a^{-1}$ & $a^{-2}$ & $a^{-1}b^{-1}$ & $b^{-1}a$ & $b^{-1}a^{-1}$ & $a^{2}b^{-1}$ & $aba$ & $ab^{2}$ \\
			\hline
			$bab$ & $ba^{-1}b$ & $a^{-1}b^{2}$ & $a^{-2}b^{-1}$ & $b^{-1}ab$ & $b^{-1}a^{-1}b$ & $b^{-3}$ & $a^{2}b^{-2}$ & $abab$ & $abab^{-1}$ & $ab^{3}$ \\
			\noalign{\hrule height .04cm}
			\fbox{$U_{12}$} & $a$ & $a^{-1}$ & $b^{-1}$ & $a^{2}$ & $ab$ & $ba$ & $a^{-1}b$ & $a^{2}b$ & $ab^{2}$ & $ab^{-1}a$ \\
			\hline
			$ba^{2}$ & $bab$ & $b^{3}$ & $ba^{-1}b$ & $a^{-1}b^{2}$ & $b^{-1}ab$ & $b^{-1}a^{-1}b$ & $ba^{2}b$ & $baba$ & $bab^{-1}a$ & $b^{-1}a^{-1}b^{2}$ \\
			\noalign{\hrule height .03cm}
			\fbox{$V_{12}$} & $a$ & $b$ & $a^{-1}$ & $a^{2}$ & $ab^{-1}$ & $b^{-1}a$ & $b^{-1}a^{-1}$ & $a^{2}b^{-1}$ & $aba$ & $ab^{2}$ \\
			\hline
			$bab$ & $ba^{-1}b$ & $a^{-1}b^{2}$ & $a^{-2}b^{-1}$ & $b^{-1}ab$ & $b^{-1}a^{-1}b$ & $b^{-3}$ & $abab$ & $abab^{-1}$ & $a^{-1}b^{3}$ & $a^{-1}b^{2}a^{-1}$ \\
			\noalign{\hrule height .04cm}
			\fbox{$U_{13}$} & $a$ & $a^{-1}$ & $b^{-1}$ & $ab^{-1}$ & $a^{-2}$ & $a^{-1}b^{-1}$ & $b^{-1}a$ & $a^{2}b$ & $ab^{-1}a$ & $ab^{-2}$ \\
			\hline
			$ba^{2}$ & $bab$ & $bab^{-1}$ & $b^{3}$ & $b^{2}a^{-1}$ & $ba^{-1}b$ & $ba^{-1}b^{-1}$ & $a^{2}b^{2}$ & $baba$ & $bab^{-1}a$ & $b^{3}a^{-1}$ \\
			\noalign{\hrule height .03cm}
			\fbox{$V_{13}$} & $a$ & $b$ & $a^{-1}$ & $ab$ & $ba$ & $ba^{-1}$ & $a^{-2}$ & $a^{2}b^{-1}$ & $aba$ & $ab^{-2}$ \\
			\hline
			$bab$ & $bab^{-1}$ & $b^{2}a^{-1}$ & $ba^{-1}b$ & $ba^{-1}b^{-1}$ & $a^{-2}b^{-1}$ & $b^{-3}$ & $a^{2}b^{-2}$ & $abab$ & $abab^{-1}$ & $b^{2}a^{-1}b^{-1}$ \\
			\noalign{\hrule height .04cm}
			\fbox{$U_{14}$} & $a$ & $a^{-1}$ & $b^{-1}$ & $a^{2}$ & $ab^{-1}$ & $a^{-1}b^{-1}$ & $b^{-1}a^{-1}$ & $a^{2}b$ & $ab^{-1}a$ & $ab^{-2}$ \\
			\hline
			$ba^{2}$ & $bab$ & $bab^{-1}$ & $b^{3}$ & $b^{2}a^{-1}$ & $ba^{-1}b$ & $ba^{-1}b^{-1}$ & $ba^{2}b$ & $baba$ & $bab^{-1}a$ & $bab^{-2}$ \\
			\noalign{\hrule height .03cm}
			\fbox{$V_{14}$} & $a$ & $b$ & $a^{-1}$ & $a^{2}$ & $ba$ & $ba^{-1}$ & $a^{-1}b$ & $a^{2}b^{-1}$ & $aba$ & $ab^{-2}$ \\
			\hline
			$bab$ & $bab^{-1}$ & $b^{2}a^{-1}$ & $ba^{-1}b$ & $ba^{-1}b^{-1}$ & $a^{-2}b^{-1}$ & $b^{-3}$ & $abab$ & $abab^{-1}$ & $ab^{-3}$ & $a^{-1}b^{2}a^{-1}$ \\
			\noalign{\hrule height .04cm}
			\fbox{$U_{15}$} & $a$ & $a^{-1}$ & $b^{-1}$ & $a^{2}$ & $ab$ & $ba$ & $ba^{-1}$ & $a^{2}b$ & $ab^{-1}a$ & $ab^{-2}$ \\
			\hline
			$ba^{2}$ & $bab$ & $bab^{-1}$ & $b^{3}$ & $b^{2}a^{-1}$ & $ba^{-1}b$ & $ba^{-1}b^{-1}$ & $abab$ & $ab^{-1}ab$ & $ba^{2}b$ & $b^{2}a^{-1}b^{-1}$ \\
			\noalign{\hrule height .03cm}
			\fbox{$V_{15}$} & $a$ & $b$ & $a^{-1}$ & $a^{2}$ & $ab^{-1}$ & $a^{-1}b^{-1}$ & $b^{-1}a$ & $a^{2}b^{-1}$ & $aba$ & $ab^{-2}$ \\
			\hline
			$bab$ & $bab^{-1}$ & $b^{2}a^{-1}$ & $ba^{-1}b$ & $ba^{-1}b^{-1}$ & $a^{-2}b^{-1}$ & $b^{-3}$ & $baba$ & $b^{3}a^{-1}$ & $a^{-1}b^{2}a^{-1}$ & $b^{-1}aba$ \\
			\noalign{\hrule height .04cm}
			\fbox{$U_{16}$} & $b$ & $a^{-1}$ & $b^{-1}$ & $ab$ & $ab^{-1}$ & $ba$ & $b^{2}$ & $a^{3}$ & $a^{2}b$ & $a^{2}b^{-1}$ \\
			\hline
			$aba$ & $ab^{2}$ & $aba^{-1}$ & $ab^{-1}a$ & $ab^{-1}a^{-1}$ & $ab^{-2}$ & $ba^{-1}b$ & $a^{2}b^{-1}a^{-1}$ & $a^{2}b^{-2}$ & $baba$ & $ba^{-1}ba$ \\
			\noalign{\hrule height .03cm}
			\fbox{$V_{16}$} & $a$ & $b$ & $b^{-1}$ & $b^{2}$ & $ba^{-1}$ & $a^{-1}b$ & $b^{-1}a^{-1}$ & $a^{2}b$ & $a^{2}b^{-1}$ & $aba$ \\
			\hline
			$aba^{-1}$ & $ab^{-1}a$ & $ab^{-1}a^{-1}$ & $bab$ & $b^{2}a^{-1}$ & $a^{-1}b^{2}$ & $a^{-3}$ & $a^{3}b^{-1}$ & $abab$ & $a^{-1}bab$ & $a^{-1}b^{2}a^{-1}$ \\
			\noalign{\hrule height .04cm}
			\fbox{$U_{17}$} & $b$ & $a^{-1}$ & $b^{-1}$ & $ab$ & $ab^{-1}$ & $b^{-1}a$ & $b^{-2}$ & $a^{3}$ & $a^{2}b$ & $a^{2}b^{-1}$ \\
			\hline
			$aba$ & $ab^{2}$ & $aba^{-1}$ & $ab^{-1}a$ & $ab^{-1}a^{-1}$ & $ab^{-2}$ & $ba^{-1}b$ & $a^{2}b^{2}$ & $a^{2}ba^{-1}$ & $baba$ & $ba^{-1}ba$ \\
			\noalign{\hrule height .03cm}
			\fbox{$V_{17}$} & $a$ & $b$ & $b^{-1}$ & $ba^{-1}$ & $a^{-1}b^{-1}$ & $b^{-1}a^{-1}$ & $b^{-2}$ & $a^{2}b$ & $a^{2}b^{-1}$ & $aba$ \\
			\hline
			$aba^{-1}$ & $ab^{-1}a$ & $ab^{-1}a^{-1}$ & $bab$ & $b^{2}a^{-1}$ & $a^{-1}b^{2}$ & $a^{-3}$ & $a^{3}b$ & $abab$ & $ba^{2}b$ & $a^{-1}bab$ \\
			\noalign{\hrule height .04cm}
			\fbox{$U_{18}$} & $b$ & $a^{-1}$ & $b^{-1}$ & $b^{2}$ & $ba^{-1}$ & $a^{-1}b^{-1}$ & $b^{-1}a^{-1}$ & $a^{3}$ & $a^{2}b$ & $a^{2}b^{-1}$ \\
			\hline
			$aba$ & $ab^{2}$ & $aba^{-1}$ & $ab^{-1}a$ & $ab^{-1}a^{-1}$ & $ab^{-2}$ & $ba^{-1}b$ & $a^{3}b$ & $a^{2}b^{-2}$ & $abab$ & $aba^{-1}b$ \\
			\noalign{\hrule height .03cm}
			\fbox{$V_{18}$} & $a$ & $b$ & $b^{-1}$ & $ab$ & $ab^{-1}$ & $b^{2}$ & $b^{-1}a$ & $a^{2}b$ & $a^{2}b^{-1}$ & $aba$ \\
			\hline
			$aba^{-1}$ & $ab^{-1}a$ & $ab^{-1}a^{-1}$ & $bab$ & $b^{2}a^{-1}$ & $a^{-1}b^{2}$ & $a^{-3}$ & $a^{2}ba^{-1}$ & $baba$ & $baba^{-1}$ & $a^{-1}b^{2}a^{-1}$ \\
			\hline
		\end{tabular}
		\end{center}
	}
\end{table}


{\small
\begin{thebibliography}{99}
\bibitem{kissat}
A.\ Biere, K.\ Fazekas, M.\ Fleury, M.\ Heisinger.
CaDiCaL, Kissat, Paracooba, Plingeling and Treengeling entering the SAT Competition 2020.
In Proc.~of {SAT Competition} 2020 -- Solver and Benchmark Descriptions,
Department of Computer Science Report Series B., 2020, 50--53.

\bibitem{bowditch}
  B.\ Bowditch. A variation on the unique product property.
  J.\ London Math.\ Soc.\ 62(3), 2000, 813--826.


\bibitem{thomas}
  C.\ M.\ Campbell, R.\ M.\ Thomas.
  On infinite groups of Fibonacci type.
Proc.\ Edinburg Math.\ Soc.\ 29, 1986, 225--232.


\bibitem{C} 
W.\ Carter. 
New examples of torsion-free non-unique product groups. 
J.\ Group Theory, 17(3), 2014, 445--464.

\bibitem{cliff}
  G.\ H.\ Cliff.
  Zero divisors and idempotents in group rings.
Can.\ J.\ Math., Vol.\ XXXII, No.\ 3, 1980, 596--602.

\bibitem{CL}
W.\ Craig, P.\ Linnell. 
Unique product groups and congruence subgroups. 
J.\ Algebra Appl., 21(02):2250025, 2022.

\bibitem{sat2}
Cryptominisat.   
\url{https://github.com/msoos/cryptominisat}.

\bibitem{DPT}
  M.\ K.\ D\k{a}bkowsky, J.\ H.\ Przytycki, A.\ A.\ Togha.
  Non-Left-Orderable $3$-Manifold Groups.
Canad.\ Math.\ Bull.\ 48X, 2005, 32--40.

\bibitem{FisherSanchezPeralta2023}
S.~P.~Fisher, P.~Sánchez-Peralta,
\newblock Division rings for group algebras of virtually compact special groups and $3$-manifold groups,
\newblock \emph{arXiv preprint} arXiv:2303.08165 (2023).



\bibitem{fox}
  C.\ D.\ Fox. Can a Fibonacci group be a unique products group?
  Bull.\ Aust.\ Math.\ Soc.\ 19, 1978, 475--477.

\bibitem{gap} 
The GAP Group. 
GAP -- Groups, Algorithms and Programming. \url{https://gap-system.org} 
 
\bibitem{G}
G.\ Gardam.
A counterexample to the unit conjecture for group rings.
Annals of Mathematics 194(3), 2021, 967--979. 

\bibitem{G2}
G.\ Gardam.
Non-trivial units of complex group rings.
  Preprint \url{arxiv.org/abs/2312.05240}, 2023.
  
\bibitem{helling}
H.\ Helling, A.\ C.\  Kim, J.\ L.\  Mennicke.
A geometric study of Fibonacci groups. 
J.\ Lie Theory 8, 1998,  1--23.


\bibitem{H1}
G.\ Higman.
Units in group rings.
D.Phil thesis, University of Oxford, 1940.

\bibitem{handbook}
  D.\ F.\ Holt, E.\ A.\ O'Brien, B.\ Eick.
  Handbook of Computational Group Theory.
  CRC Press, 2005.


\bibitem{johnson}
  D.\ L.\ Johnson.
  Topics in the theory of group presentations. London Math.\ Soc.\ Lecture Notes Series 42.
  Cambridge University Press, 1980.
  

\bibitem{K1}
I.\ Kaplansky. 
``Problems in the theory of rings" revisited. 
Amer.\ Math.\ Monthly, 77, 1970, 445--454.

\bibitem{kim} 
  A.\  C.\ Kim, A.\ Vesnin. Fractional Fibonacci groups and manifolds.
 Sibirsk.\ Mat.\ Zh.\ 39, 1998, 765--775; translation in Siberian Math.\ J.\ 39, 1998, 655--664.


\bibitem{LS}
  R.\ C.\ Lyndon, P.\ E.\ Schupp.
 Combinatorial Group Theory, Springer, 1977.
 
\bibitem{murray}
  A.\ Murray.
  More Counterexamples to the Unit Conjecture for Group Rings
  Preprint \url{arxiv.org/abs/2106.02147}, 2021.

\bibitem{nielsen}
P.\ P.\ Nielsen, L.\ Soelberg.  
Small sets without unique products in torsion-free groups.
Journal Alg.\ Appl.\ 23(8), 2024, 2550050.
  
\bibitem{blog}
A.\ Nies. Logic Blog 2022. \url{https://arxiv.org/abs/2302.11853}
  
\bibitem{passman}  
D.\ S.\ Passman. The algebraic structure of group rings. Courier Corporation, 2011.
  
\bibitem{P}
S.\ Promislow. 
A simple example of a torsion-free, non unique product group.
Bull.\ Lond.\ Math.\ Soc.,20(4), 1988, 302–304.

\bibitem{RKD}
J.\ Raimbault, S.\ Kionke, N.\ Dunfield. 
On geometric aspects of diffuse groups.
Doc.\ Math., 21, 2016, 873--915.

\bibitem{RS}
E.\ Rips, Y.\ Segev. 
Torsion-free group without unique product property. 
J.\ Algebra, 108(1), 1987, 116--126.


\bibitem{rotman}
J.\ J.\ Rotman. Introduction to the theory of groups. Springer, 1995.  


\bibitem{Rudin.Schneider:64}
W.~Rudin, H.~Schneider.
\newblock Idempotents in group rings.
\newblock 1964.
\newblock US Department of the Army. Mathematics Research Center,
available at
\url{https://people.math.wisc.edu/hans/paper_archive/scanned_papers/hs020.pdf}.


\bibitem{segal}
D.\ Segal. Polycyclic Groups. Cambridge University Press, 1983.

\bibitem{SpallittaBiereSebastiani2025}
G.~Spallitta, A.~Biere, and R.~Sebastiani,
\newblock Disjoint projected enumeration for SAT and SMT without blocking clauses,
\newblock {\em Artificial Intelligence}, 2025.

\bibitem{TabularAllSATCode}
G.~Spallitta, A.~Biere, and R.~Sebastiani,
\newblock TabularAllSAT source code,
\newblock Zenodo, 2024.
\newblock DOI: 10.5281/zenodo.14197776.


\end{thebibliography}

}
	\end{document}